\let\nc\newcommand 
\theoremstyle{plain}      
\newtheorem{thm}{Theorem}[section]     
\newtheorem{theorem}[thm]{Theorem}
\newtheorem{lemma}[thm]{Lemma}     
\newtheorem{prop}[thm]{Proposition}     
\newtheorem{proposition}[thm]{Proposition}
\theoremstyle{definition}      
\newtheorem{definition}[thm]{Definition}     
\newcommand{\definedas}{\mathrel{\raise.095ex\hbox{\rm :}\mkern-5.2mu=}}
\renewcommand{\epsilon}{\varepsilon}
\def\lamin{\lambda_{\rm min}^+}    
\def\al{{\alpha}}         
\def\be{{\beta}}         
\def\de{{\delta}}         
\def\om{{\omega}}         
\def\Om{{\Omega}}         
\def\la{{\lambda}}
\def\Si{{\Sigma}}
\def\phi{{\varphi}}
\let\theta\vartheta
\let\phi\varphi
\def\cR{{\mathcal R}}
\DeclareMathAlphabet{\doba}{U}{msb}{m}{n}         
\gdef\mC{\doba{C}}         
\gdef\mN{\doba{N}}
\gdef\mR{\doba{R}}         
\gdef\mS{\doba{S}}         
\gdef\mZ{\doba{Z}}
\def\grad{{\mathop{\rm grad}}} 
\def\scal{{\mathop{\rm scal}}}
\def\Vol{{\mathop{\rm Vol}}}
\def\Spin{{\mathop{\rm Spin}}}
\def\SO{{\mathop{\rm SO}}} 
\def\supp{{\mathop{\rm supp}}}
\def\ker{{\mathop{\rm ker\,}}}
\def\dim{{\mathop{\rm dim}}} 
\def\Id{\operatorname{Id}} 
\let\spec\Spec 
\def\End{{\mathop{\rm End}}}
\def\cRneq{\cR_{p,U,\gfl}^{\neq 0}} \def\cRinv{\cR_{U,\gfl}^{\rm inv}}
  \def\eref#1{{\rm (\ref{#1})}}
\let\wihat\widehat
\nc{\PsoM}{\mbox{{$P_{\mbox{\scriptsize SO}}(M)$}}}
\nc{\PsonM}{\mbox{{$P_{\mbox{\scriptsize SO(n)}}(M)$}}}
\nc{\PsoG}{\mbox{{$P_{\mbox{\scriptsize SO}}(G)$}}}
\nc{\PspinM}{\mbox{{$P_{\mbox{\scriptsize Spin}}(M)$}}}
\nc{\PspinG}{\mbox{{$P_{\mbox{\scriptsize Spin}}(G)$}}}
\nc{\Pspineps}{\mbox{{$P_{\mbox{\scriptsize Spin,}\epsilon}$}}}
\def\gfl{g_{\rm flat}}
\begin{document}


\title{Mass endomorphism, surgery and perturbations}

\author{Bernd Ammann}
\address{Bernd Ammann, Fakult\"at f\"ur Mathematik \\
   Universit\"at Regensburg \\
   93040 Regensburg \\
   Germany} 
\email{bernd.ammann@mathematik.uni-regensburg.de}

\author{Mattias Dahl}
\address{Mattias Dahl, Institutionen f\"or Matematik \\
   Kungliga Tekniska H\"ogskolan \\
   100 44 Stockholm \\
   Sweden} 
\email{dahl@math.kth.se}

\author{Andreas Hermann}
\address{Andreas Hermann, Fakult\"at f\"ur Mathematik \\
   Universit\"at Regensburg \\
   93040 Regensburg \\
   Germany} 
\email{andreas.hermann@mathematik.uni-regensburg.de}

\author{Emmanuel Humbert}
\address{Emmanuel Humbert, Institut \'Elie Cartan, BP 239 \\
   Universit\'e de Nancy\\
   54506 Vandoeuvre-l\`es-Nancy Cedex \\
   France} 
\email{humbert@iecn.u-nancy.fr}


\date{September 28, 2010}
\keywords{Dirac operator, mass endomorphism, surgery\\ 
\phantom{ab\,\,}\emph{MSC2010.}
53C27 (primary), 
57R65, 58J05, 58J60 (secondary) 
}


\begin{abstract}
We prove that the mass endomorphism associated to the Dirac operator
on a Riemannian manifold is non-zero for generic Riemannian
metrics. The proof involves a study of the mass endomorphism under
surgery, its behavior near metrics with harmonic spinors, and
analytic perturbation arguments. 
\end{abstract}

\maketitle

\section{Introduction}

Let $(M,g)$ be a compact Riemannian spin manifold, we always assume
that a spin manifold comes equipped with a choice of orientation and
spin structure. Assume that the metric $g$ is flat in a neighborhood
of a point $p \in M$ and has no harmonic spinors. Then the Green's
function $G^g$ at $p$ for the Dirac operator $D^g$ exists. The
constant term in the expansion of $G^g$ at $p$ is an endomorphism of
$\Si_p M$ called the mass endomorphism. The terminology is motivated
by the analogy to the ADM mass being the constant term in the Green's
function of the Yamabe operator. The non-nullity of the mass
endomorphism has many interesting consequences. In particular,
combining the results presented here with inequalities in
\cite{ammann.humbert.morel:06} and \cite{hijazi:91}, one obtains a
solution of the Yamabe problem.

Finding examples for which the mass endomorphism does not vanish is
then a natural problem. In \cite{hermann:10}, it is proven that for a
generic metric on a manifold of dimension $3$, the mass endomorphism
does not vanish in a given point $p$. The aim of this paper is to
extend this result to all dimensions at least $3$, see Theorem
\ref{main}.

\section{Definitions and main result}

The goal of this section is to give a precise statement of the main
results. At first, the mass endomorphism is defined. Then, in
Subsection \ref{flat_aroundp_metrics}, we define suitable sets of
metrics to work with. Further, in Subsection \ref{alpha_genus}, we
explain some well known facts on the $\alpha$-genus. Finally, in
Subsection \ref{mainresult} we state Theorem \ref{main}, which is the
main result of this article.

\subsection{Mass endomorphism}
\label{submassendo}

In this section we will recall the mass endomorphism introduced in
\cite{ammann.humbert.morel:06}. Let $(M,g)$ be a compact spin manifold
of dimension $n \geq 2$ and $p \in M$. Assume that the metric $g$ is
flat in a neighborhood of $p$ and that the Dirac operator $D^g$ is
invertible. The Green's function $G^g(p, \cdot) = G^g(\cdot)$ of $D^g$
at $p$ is defined by
\begin{equation*}
  D^g G^g = \delta_p \Id_{\Sigma_p M},
\end{equation*}
where $\delta_p$ is the Dirac distribution at $p$ and $G^g$ is viewed
as a linear map which associates to each spinor in $\Sigma_p M$ a
smooth spinor field on $M \setminus \{p\}$. The distributional
equation satisfied by $G^g$ should be interpreted as
\begin{equation*}
  \int_M \< G^g(x) \psi_0  , D^g \phi(x) \> \, dv^g (x)
  =
  \< \psi_0, \phi(p) \>
\end{equation*}
for any $\psi_0 \in \Sigma_p M$ and any smooth spinor field $\phi$.
Let $\xi$ denote the flat metric on $\mR^n$, it then holds that
\begin{equation*}
  G^{\xi} \psi = - \frac{1}{\omega_{n-1} |x|^n} x \cdot \psi.  
\end{equation*}
at $p=0$, where $\omega_{n-1}$ is defined as the volume of $S^{n-1}$.
The following Proposition is proved in
\cite{ammann.humbert.morel:06}.

\begin{prop}\label{prop.before.mass}
  Let $(M,g)$ be a compact spin manifold of dimension $n \geq 2$.
  Assume that $g$ is flat on a neighborhood $U$ of a point $p \in M$.
  Then, for $\psi_0 \in \Sigma_p M$ we have
  \begin{equation*}
    G^g (x) \psi_0 
    =  
    - \frac{1}{\omega_{n-1}|x|^n} x \cdot \psi_0
    + v^g(x) \psi_0,
  \end{equation*}
  where the spinor field $v^g(x) \psi_0$ satisfies $D^g ( v^g(x)
  \psi_0 ) = 0$ in a neighborhood of $p$.
\end{prop}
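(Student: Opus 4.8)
The plan is to obtain $v^g$ directly from the standard parametrix construction of the Green's function, reading off both its regularity at $p$ and the equation it satisfies there; the distributional identity recalled in the excerpt then serves only as a consistency check. First I would exploit the flatness of $g$ near $p$: geodesic normal coordinates centered at $p$ identify a neighbourhood $U$ isometrically with a Euclidean ball in $(\mR^n,\xi)$, and radial parallel transport trivializes $\Sigma M|_U$ by parallel sections. Under this trivialization the restricted Dirac operator $D^g|_U$ becomes the flat operator $D^\xi$, the fibre $\Sigma_pM$ is identified with $\Sigma_0\mR^n$, and a fixed $\psi_0\in\Sigma_pM$ extends to the constant spinor field $\psi_0$ on the ball. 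In particular the singular term $-\tfrac1{\omega_{n-1}|x|^n}\,x\cdot\psi_0$ is exactly $G^\xi\psi_0$, so by the formula recalled before the Proposition it satisfies $D^g\bigl(-\tfrac1{\omega_{n-1}|x|^n}\,x\cdot\psi_0\bigr)=\delta_p\psi_0$ on $U$ in the distributional sense.

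Next I would build $G^g$ by a parametrix. Let $\chi$ be a smooth cut-off equal to $1$ near $p$ and supported in $U$. Then $\chi\,G^\xi\psi_0$ is a distributional section of $\Sigma M$ with $D^g(\chi\,G^\xi\psi_0)=\delta_p\psi_0+[D^g,\chi]\,G^\xi\psi_0$, and the commutator $[D^g,\chi]$ is a zeroth-order operator supported where $d\chi\neq0$, hence away from $p$, where $G^\xi\psi_0$ is smooth; thus $w\definedas[D^g,\chi]\,G^\xi\psi_0$ is a smooth compactly supported spinor field. Since $D^g$ is invertible, $h\definedas -(D^g)^{-1}w$ is smooth on $M$, and $\chi\,G^\xi\psi_0+h$ satisfies the defining equation of the Green's function, so by uniqueness $G^g\psi_0=\chi\,G^\xi\psi_0+h$. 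Therefore, in the region where $\chi\equiv1$,
\[
  v^g(x)\psi_0=G^g(x)\psi_0+\frac1{\omega_{n-1}|x|^n}\,x\cdot\psi_0=G^g(x)\psi_0-G^\xi(x)\psi_0=h(x),
\]
which is smooth, and $D^g\bigl(v^g(x)\psi_0\bigr)=D^gh=-w=0$ near $p$. Since $G^\xi\psi_0$ and $h$ depend linearly on $\psi_0$, this also exhibits $v^g(x)$ as an element of $\End(\Sigma_pM)$.

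Alternatively, once one knows $G^g(\cdot)\psi_0\in L^1_{\mathrm{loc}}$ near $p$, the spinor field $v^g(\cdot)\psi_0$ is a distribution on $U$, and subtracting the restriction of $D^gG^g=\delta_p\Id$ to $U$ from the flat identity above gives $D^g\bigl(v^g(\cdot)\psi_0\bigr)=0$ weakly on $U$; interior elliptic regularity for the first-order elliptic operator $D^g$ then yields smoothness. The only step here that is not purely formal is precisely this a priori control of the singularity of $G^g$ at $p$, which is exactly what the parametrix construction supplies; everything else — the identification $D^g|_U=D^\xi$, the mapping properties of $(D^g)^{-1}$, and elliptic regularity — is standard.
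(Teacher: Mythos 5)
Your argument is correct: the parametrix $\chi G^\xi\psi_0$ plus the correction $h=-(D^g)^{-1}\bigl([D^g,\chi]G^\xi\psi_0\bigr)$, together with uniqueness of the Green's function (which, like the existence of $(D^g)^{-1}$, uses the standing invertibility assumption from Subsection~\ref{submassendo}), gives exactly the claimed decomposition with $D^g(v^g\psi_0)=0$ where $\chi\equiv1$. The paper itself does not prove this Proposition but quotes it from \cite{ammann.humbert.morel:06}, and your construction (equivalently, your second remark that $G^g\psi_0-G^\xi\psi_0$ is weakly $D^g$-harmonic on $U$ plus elliptic regularity) is essentially the standard argument given there, so there is no substantive divergence to report.
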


This allows us to define the mass endomorphism.
\begin{definition}\label{def.mass}
  The {\it mass endomorphism} $\alpha^g: \Sigma_p M \to \Sigma_p M$
  for a point $p \in U \subset M$ is defined by
  \begin{equation*}
    \alpha^g (\psi_0) \definedas v^g(p) \psi_0 .
  \end{equation*}
\end{definition}
In particular, we have
\begin{equation*}
  \alpha^g ( \psi_0)
  =
  \lim_{x \to 0} \left( 
    G^g (x) \psi_0
    + \frac{1}{\omega_{n-1}|x|^n} x \cdot \psi_0
  \right).
\end{equation*}
The mass endomorphism is thus (up to a constant) defined as the zero
order term in the asymptotic expansion of the Green's function in
normal coordinates around $p$.

\subsection{Metrics flat around a point}
\label{flat_aroundp_metrics}

Let $M$ be a connected spin manifold, $p\in U$ where $U$ is an open
subset of $M$. A Riemannian metric on $U$ will be called
\emph{extendible} if it possesses a smooth extension to a (not
necessarily flat) Riemannian metric on $M$.

Fix a flat extendible metric $\gfl$ on $U$. The set of all smooth
extensions of $\gfl$ is denoted by
\begin{equation*}
  \cR_{U,\gfl}(M)
  \definedas
  \{g\,|\, \mbox{$g$ is a metric on $M$ such that
    $g|_U=\gfl$}\}.
\end{equation*}
Inside this set of metrics we study those with invertible Dirac
operator
\begin{equation*}
  \cRinv(M)
  \definedas
  \{ g \in \cR_{U,\gfl}(M) \,|\, \mbox{$D^g$ is invertible} \}.
\end{equation*}
The main subject of the article is the set
\begin{equation*}
  \cRneq(M) 
  \definedas
  \{ g \in \cRinv(M) \, | \,
  \mbox{the mass endomorphism at $p$ is not $0$} \}. 
\end{equation*}
Note that $\cR_{U,\gfl}^{\rm inv} (M)$ can be empty (see Subsection
\ref{alpha_genus}). We say that a subset $A \subset \cR_{U,\gfl} (M)$
is {\it generic} in $\cR_{U,\gfl} (M)$ if it is open in the
$C^1$-topology and dense in the $C^\infty$-topology in
$\cR_{U,\gfl}(M)$.

\subsection{The $\alpha$-genus} \label{alpha_genus}

The $\alpha$-genus is a ring homomorphism $\alpha:
\Om_*^{\mbox{spin}}(\mbox{pt}) \to KO_*(\mbox{pt})$ where
$\Om_*^{\mbox{spin}}(\mbox{pt})$ is the spin bordism ring and
$KO_*(\mbox{pt})$ is the ring of coefficients for $KO$-theory. In
particular, the well-definedness of the map means that the
$\alpha$-genus $\alpha(M)$ of a spin manifold $M$ depends only on its
spin bordism class, and the homomorphism property means that it is
additive with respect to the disjoint union and multiplicative with
respect to the product of spin manifolds. We recall that if the
dimension of $M$ is $n$ then $\alpha(M) \in KO_n(\mbox{pt})$ and as
groups we have
\begin{equation*}
  KO_n(\mbox{pt}) \cong
  \begin{cases} 
    \mZ       & \text{if $n \equiv 0 \mod 4$;}  \\
    \mZ/2\mZ  & \text{if $n \equiv 1,2 \mod 8$;}\\
    0 & \text{otherwise.}
  \end{cases}
\end{equation*}

Let $(M,g)$ be a compact spin manifold. The Atiyah-Singer index
theorem states that the Clifford index of $D^g$ coincides with
$\alpha(M)$, see \cite{lawson.michelsohn:89}. This implies that a
manifold $M$ with $\alpha(M) \neq 0$ cannot have a metric with
invertible Dirac operator. If $M$ is not connected, one can apply the
argument in each connected component. Thus there are many
non-connected examples $M$, with $\al(M) = 0$, but admitting no metric
with invertible Dirac operator.

However, the converse holds true under the additional assumption that
$M$ is connected, see \cite{ammann.dahl.humbert:09}. The proof of the
converse relies on a surgery construction preserving invertibility of
the Dirac operator together with the Stolz's examples of manifolds
with positive scalar curvature in every spin bordism class
\cite{stolz:92}, special cases were proved previously in
\cite{maier:97} and \cite{baer.dahl:02}. For our purposes, it is more
convenient to use a slightly stronger version, presented in
\cite{ammann.dahl.humbert:p09}.

\begin{theorem} 
  Let $M$ be a connected compact spin manifold and let $p\in M$. Let
  $U$ be an open subset of $M$, $p\in U\neq M$, and let $\gfl$ be a flat
  extendible metric on $U$. Then $\cR_{U,\gfl}^{\rm inv} (M) \neq
  \emptyset$ if and only if $\alpha(M) = 0$.
\end{theorem}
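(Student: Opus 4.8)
The plan is to treat the two implications separately; the first is a formality. If $g \in \cRinv(M)$ then $D^g$ is invertible, so $\ker D^g = 0$ and the Clifford index of $D^g$ vanishes; by the Atiyah--Singer index theorem this index equals $\alpha(M)$ (see \cite{lawson.michelsohn:89} and the discussion preceding the theorem), hence $\alpha(M) = 0$. This direction uses neither the connectedness of $M$ nor anything about $U$, $p$ or $\gfl$.

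The substance is the converse $\alpha(M) = 0 \Rightarrow \cRinv(M) \neq \emptyset$. The plan is to combine three inputs; we concentrate on $n = \dim M \geq 5$, the cases $n \in \{2,3,4\}$ being treated separately (for $n = 3$ one has $KO_3(\mathrm{pt}) = 0$, so the hypothesis is automatic), as in \cite{maier:97} and \cite{baer.dahl:02}. First, by Stolz's theorem \cite{stolz:92} the vanishing of $\alpha(M)$ forces the spin bordism class of $M$ to contain a closed spin manifold $N$ carrying a metric $h$ of positive scalar curvature; taking connected sums of components we may assume $N$ connected, and the Lichnerowicz formula then gives $\ker D^h = 0$, so $D^h$ is invertible. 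Second, the surgery construction of \cite{ammann.dahl.humbert:09}: carrying a metric with invertible Dirac operator is preserved under surgeries of codimension $\geq 3$, so that, choosing a spin cobordism from $N$ to $M$ and trading handles to realize it by such surgeries (the delicate handles being those of index $1,2$ and, dually, $n-1,n$, which is exactly where connectedness of $M$ enters), one transports $h$ to a metric on $M$ with invertible Dirac operator. Third, the refinement of \cite{ammann.dahl.humbert:p09}: since $U \neq M$, all surgeries in the previous step can be performed in $M \setminus \ol U$, so the entire construction can be run with a neighborhood of $p$ held fixed and isometric to $(U,\gfl)$; the resulting metric $g$ then satisfies $g|_U = \gfl$ with $D^g$ invertible, i.e. $g \in \cRinv(M)$. (Note $\cR_{U,\gfl}(M) \neq \emptyset$ because $\gfl$ is extendible, so the statement is not vacuous.)

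The step I expect to be the real obstacle is the combination of the second and third inputs: realizing a spin cobordism from a positive-scalar-curvature manifold to $M$ by surgeries all of codimension $\geq 3$ --- which forces a careful handle-trading argument resting on the connectedness of $M$ and on the fundamental-group bookkeeping that makes the cobordism $2$-connected from the $M$-side --- while simultaneously carrying along a fixed region on which the metric equals the prescribed flat metric $\gfl$. This is precisely the content of the strengthened surgery theorem of \cite{ammann.dahl.humbert:p09}; in a full write-up one would invoke it directly, the remainder of the argument being routine once \cite{stolz:92} and the Lichnerowicz formula are at hand.
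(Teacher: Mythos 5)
Your first direction (invertible $D^g$ forces vanishing Clifford index, hence $\alpha(M)=0$ by Atiyah--Singer) is exactly the argument the paper indicates, and your overall skeleton for the converse (Stolz's positive-scalar-curvature representatives, a surgery theorem preserving invertibility of the Dirac operator, and a localized refinement keeping the metric equal to $\gfl$ on $U$) is the intended route. Note, however, that the paper does not prove this theorem at all: it imports it from \cite{ammann.dahl.humbert:p09}, with the non-localized statement coming from \cite{ammann.dahl.humbert:09}. Since you also end by invoking \cite{ammann.dahl.humbert:p09} directly, your proposal ultimately defers the hard step to the same reference; the problem lies in the sketch you give of that step.

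The gap is in the bordism-to-surgery reduction. You work with the codimension~$\geq 3$ surgery theorem of \cite{baer.dahl:02} and claim that connectedness of $M$ plus handle-trading lets you realize a spin bordism $W$ from the Stolz representative $N$ to $M$ by surgeries of codimension~$\geq 3$. Realizing $W$ this way requires a handle decomposition relative to $M$ with all handles of index $\geq 3$, i.e.\ the pair $(W,M)$ must be $2$-connected; by the exact sequence of the pair this forces $\pi_1(M)\to\pi_1(W)$ to be an isomorphism and $\pi_2(M)\to\pi_2(W)$ to be onto. Surgeries in the interior of $W$ can make $\pi_1(M)\to\pi_1(W)$ surjective and can kill a cokernel on $\pi_2$, but they only add relations to $\pi_1(W)$ and hence can never repair a failure of injectivity; fixing that would require choosing the bordism in $\Omega^{\rm spin}_n(B\pi_1(M))$ with a suitable reference map, and Stolz's theorem provides positive-scalar-curvature representatives only in $\Omega^{\rm spin}_n(\mathrm{pt})$, not over $B\pi_1(M)$. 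So for non-simply-connected $M$ your step genuinely breaks down --- this is precisely the limitation of \cite{baer.dahl:02} that \cite{ammann.dahl.humbert:09} was written to remove: their surgery theorem works in codimension~$\geq 2$, so only handles of index $\leq 1$ (seen from the $M$ side) must be traded away, i.e.\ $(W,M)$ need only be $1$-connected, and \emph{that} is what connectedness of $M$ buys (Proposition 4.3 of \cite{ammann.dahl.humbert:09}; compare the proof of Theorem~\ref{existence} in this paper, which uses surgeries of codimension $2$ and higher). A further, smaller point: your low-dimensional fallback to \cite{maier:97} and \cite{baer.dahl:02} yields invertibility for some (generic) metric but not the constraint $g|_U=\gfl$, so the localization provided by \cite{ammann.dahl.humbert:p09} is needed there as well.
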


Using real analyticity one obtains that $\cR_{U,\gfl}^{\rm inv}(M)$ is
open and dense in $\cR_{U,\gfl}(M)$.

\subsection{Main result} \label{mainresult}

The main result of this paper is the following: If $\alpha(M) = 0$, so
that the mass endomorphism is defined for metrics in the non-empty set
$\cR_{U,\gfl}^{\rm inv}(M)$, then a generic metric has a non-zero mass
endomorphism.

\begin{theorem} \label{main} Let $M$ be a compact connected
  $n$-dimensional spin manifold with $n \geq 3$ and with vanishing
  $\alpha$-genus. Let $p \in M$ and assume that $\gfl$ is an
  extendible metric which is flat around $p$. Then there exists a
  neighborhood $U$ of $p$ for which $\cRneq(M)$ is generic in
  $\cR_{U,\gfl}(M)$.
\end{theorem}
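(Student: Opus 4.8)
\section*{Proof proposal}

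\textbf{Overall strategy.} The plan is to prove genericity in two stages, following the standard pattern (and mirroring what is done in dimension~$3$ in \cite{hermann:10}): first establish that $\cRneq(M)$ is open in the $C^1$-topology inside $\cR_{U,\gfl}(M)$, then establish density in the $C^\infty$-topology. Openness should be the easy half: the mass endomorphism $\alpha^g$ depends continuously (indeed real-analytically, away from metrics with harmonic spinors) on $g$ in a suitable topology, because the Green's function and its constant term are obtained by solving an elliptic boundary value problem whose coefficients vary continuously with the metric; since invertibility of $D^g$ is itself a $C^1$-open condition on $\cR_{U,\gfl}(M)$ and $\{\alpha^g \neq 0\}$ is the preimage of an open set, $\cRneq(M)$ is $C^1$-open. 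The real content is density.

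\textbf{Density via the three tools advertised in the abstract.} To prove density I would argue by contradiction: suppose there is a metric $g_0 \in \cR_{U,\gfl}(M)$ and a $C^\infty$-neighborhood of it containing no metric in $\cRneq(M)$, i.e.\ every nearby metric with invertible Dirac operator has vanishing mass endomorphism at $p$. The idea is to modify $M$ by surgery away from $U$ to reach a manifold where non-vanishing of the mass endomorphism is already known, while controlling the mass endomorphism under the surgery and under the perturbations needed to keep $D^g$ invertible. Concretely: (i) \emph{Surgery step.} Using the surgery construction of \cite{ammann.dahl.humbert:09, ammann.dahl.humbert:p09} that preserves invertibility of the Dirac operator — performed on spheres disjoint from $U$ and from $p$ — one can connect $(M,g_0)$ to a reference manifold, e.g.\ a manifold already known to carry a metric with non-zero mass endomorphism at the relevant point (the $3$-dimensional examples of \cite{hermann:10}, suitably stabilized by taking products, or low-dimensional sphere/torus examples), all within the same spin bordism considerations that force $\alpha(M)=0$. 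One must show that the mass endomorphism at $p$ is, in an appropriate sense, unchanged (or at least that its vanishing/non-vanishing is unchanged) when the surgery is localized far from $p$; this is plausible because the Green's function near $p$ is governed by the geometry near $p$ together with a globally-defined harmonic-spinor correction, and the correction term varies continuously as the metric degenerates along the surgery necks. (ii) \emph{Behavior near metrics with harmonic spinors.} The surgery/gluing procedure may pass through metrics where $D^g$ fails to be invertible; here one uses the analysis of the mass endomorphism near such metrics to show that the limit of $\alpha^g$ as one approaches a harmonic-spinor metric is controlled, so non-vanishing is not destroyed in the limit. (iii) \emph{Analytic perturbation.} Finally, since everything in sight (the Dirac operator, its spectrum, the Green's function where defined) depends real-analytically on the metric along a one-parameter family, and the mass endomorphism is real-analytic on $\cRinv$, if it vanished on a $C^\infty$-open set it would vanish on the whole connected real-analytic family — contradicting the non-vanishing produced at the reference manifold. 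This is the role of the "analytic perturbation arguments" in the abstract.

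\textbf{The main obstacle.} I expect the hard part to be Step~(i): making precise and proving the \emph{surgery formula for the mass endomorphism}, i.e.\ showing that performing surgery in codimension $\geq 3$ away from $p$ on a manifold with $\alpha(M)=0$ does not force the mass endomorphism at $p$ to vanish (equivalently, that one can transport a non-vanishing example along a surgery to the given $M$). This requires careful control of the Green's function $G^g$ near $p$ while the metric develops a long thin neck elsewhere: one needs that the "extra" harmonic part $v^g$ converges, as the neck pinches, to the corresponding object on the surgered manifold, with no loss of the constant term at $p$. The technical heart is therefore a gluing/convergence estimate for $v^g(p)$ uniform in the neck parameter, combined with the already-established surgery result that keeps $D^g$ invertible along the family. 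Steps~(ii) and~(iii) are then comparatively soft, being applications of elliptic regularity, analytic Fredholm theory, and Kato-type perturbation theory for the family $g \mapsto D^g$.
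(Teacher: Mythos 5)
Your overall architecture (openness from $C^1$-continuity of $\al^g$, density from surgery plus analytic families) is the right one, but two of your three steps have genuine gaps. First, the source of the initial non-vanishing example. You propose transporting an ``already known'' example ($\mR P^n$, $n\equiv 3\mod 4$, or $3$-dimensional examples of \cite{hermann:10} ``stabilized by products'', or ``sphere/torus examples'') to $M$ by surgery. None of these is available in general: the projective space examples exist only in dimensions $\equiv 3\mod 4$, there is no product formula for the mass endomorphism that would let you stabilize the $3$-dimensional examples, and the reference manifold must in addition be spin bordant to $M$ so that $M$ is reached by surgeries of codimension $\ge 2$ missing $U$. The paper has to \emph{manufacture} the example: on $T^n$ with the spin structure carrying parallel spinors, $\al(T^n)=0$ for $n\ge 3$ gives a metric $g_1\in\cRinv(T^n)$ agreeing with the flat metric $g_0$ near $p$; along $tg_1+(1-t)g_0$ one finds invertible metrics $C^1$-close to $g_0$, and Theorem~\ref{mass_to_infty} (blow-up of the mass endomorphism near a metric whose harmonic spinors have injective evaluation at $p$ --- here the parallel spinors) shows these have non-zero mass endomorphism. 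One then puts $h_0\sqcup h_0\sqcup g$ on $T^n\sqcup(-T^n)\sqcup M$, which is spin bordant to $M$, and transports non-vanishing through the surgeries by Theorem~\ref{conv_mass} ($\al^{g_\de}_p\to\al^g_p$, proved via a uniform spectral gap). In particular your step (ii) misassigns the role of the harmonic-spinor analysis: the surgery construction never passes through metrics with harmonic spinors (the spectral gap is uniform along the neck), and the behavior near harmonic-spinor metrics is used to \emph{create} the first example, not to ``control limits'' of $\al^g$ through degenerations --- indeed $\al^g$ does not have a controlled finite limit there; it blows up, which is exactly what is exploited.

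Second, the density step. Your unique-continuation argument assumes that vanishing of $\al^{g_t}$ on an open parameter set propagates along ``the whole connected real-analytic family''. But $\al^{g_t}$ is only defined where $D^{g_t}$ is invertible, and a real-analytic path in $\cR_{U,\gfl}(M)$ joining the example metric to a metric in your hypothetical all-vanishing $C^\infty$-neighborhood may cross the set $T$ of parameters with harmonic spinors; the complement $(a,b)\setminus T$ can be disconnected, so real-analytic unique continuation stops precisely at those points and your contradiction does not go through as stated. This is the issue the paper's ``analytic perturbation arguments'' are really about: one extends $t\mapsto D^{g_t}_g$ to a complex-analytic family on a neighborhood of $(a,b)$ in $\mC$, arranges that non-invertibility occurs only on $T\subset(a,b)$, observes that $t\mapsto\al^{g_t}$ is holomorphic on the \emph{connected} set obtained by removing $T$, and continues around the bad parameters in the imaginary direction (Theorem~\ref{exis_to_gen}). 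Your $C^1$-openness argument does match the paper's (continuity of $g\mapsto\al^g$ on $\cRinv(M)$), and your identification of the surgery convergence estimate as a technical heart is fair, but without the torus/blow-up construction of the first example and without the complexification step the proof is incomplete.
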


Theorem \ref{main} will follow from Theorems~\ref{existence} and
\ref{exis_to_gen} below.

\subsection{The relation to the ADM mass}
\label{ss.adm}

Let $(M,g)$ be a compact spin manifold of dimension $n \geq 3$. Assume
that $g$ is flat in a neighborhood $U$ of a point $p \in M$. The {\em
  conformal Laplacian} is then defined by
\begin{equation*}
  L^g
  \definedas
  \frac{4(n-1)}{n-2} \Delta^g + \scal^g,
\end{equation*}
where $\Delta^g$ is the non-negative Laplacian and where $\scal^g$ is
the scalar curvature of the metric $g$. As for the Dirac operator
$D^g$, we say that a function $H^g \in L^1(M) \cap C^{\infty} (M
\setminus \{ p \})$ is the {\em Green's function} for $L^g$ if
\begin{equation*}
  L^g H^g = \delta_p
\end{equation*}
in the sense of distributions. Assume that the metric $g$ is conformal
to a metric with positive scalar curvature, then it is well known (see
for instance \cite{lee.parker:87}) that the Green's function $H^g$ of
$L^g$ exists, is positive everywhere and has the following expansion
at $p$:
\begin{equation*}
  H^g(x) = 
  \frac{1}{4(n-1)\om_{n-1}\,d^g(x,p)^{n-2}} + A^g + o(x),
\end{equation*}
where $A^g \in \mR$ and $o(x)$ is a smooth function with $o(p) = 0$.

Set $\widetilde{M} = M\setminus \{ p \}$ and $\widetilde{g} =
H^{\frac{4}{n-2}} g$. Schoen \cite{schoen:84} observed that the
complete non-compact manifold $(\widetilde{M},\widetilde{g})$ is
asymptotically flat and its ADM mass is $a_n A^g$, where $a_n > 0$
depends only on $n$. We recall that an asymptotically flat manifold,
if interpreted as a time symmetric spacelike hypersurface of a
lorentzian manifold, is obtained by considering an isolated system at
a fixed time in general relativity. The ADM mass gives the total
energy of this system. With this remark, the number $A^g$ is often
called the mass of the compact manifold $(M,g)$. By analogy, the
operator $\al^g(p)$, which is by construction the spin analog of
$A^g$, is called the "mass endomorphism" of $(M,g)$ at $p$. We will
also see in Subsection \ref{ss.conclus} that the mass
endomorphism plays the same role as the number $A^g$ in a Dirac
operator version of the Yamabe problem.

\subsection{Conclusions of non-zero mass}
\label{ss.conclus}

In this Subsection we will summarize why we are interested in metrics
with non-zero mass endomorphism.

Let $(M,g)$ be a compact Riemannian spin manifold of dimension $n \geq
2$. For a metric $\widetilde g$ in the conformal class $[g]$ of $g$,
let $\lambda_1(\widetilde{g})$ be the eigenvalue of the Dirac operator
$D^g$ with the smallest absolute value (it may be either positive or
negative). We define
\begin{equation*}
  \lamin(M,[g]) 
  = 
  \inf_{\widetilde{g} \in [g] } |\lambda_1(\widetilde{g})|
  \Vol^{\widetilde{g}} (M)^{1/n}. 
\end{equation*}
For this conformal invariant $\lamin(M,[g])$ it was proven in
\cite{ammann:03,ammann:habil} and
\cite{ammann.grosjean.humbert.morel:08} that
\begin{equation*}
  0 < 
  \lamin(M,[g]) \leq \lamin(\mS^n) 
  = 
  \frac{n}{2}\, \om_n^{1/n}.
\end{equation*}
The strict inequality
\begin{equation} \label{str_in} \lamin(M,[g]) < \frac{n}{2}
  \om_n^{1/n}
\end{equation}    
has several applications, see
\cite{ammann:09,ammann.grosjean.humbert.morel:08,ammann.humbert.morel:06}:

\begin{itemize}
\item Inequality \eref{str_in} implies that the invariant
  $\lamin(M,[g])$ is attained by a generalized metric, that is, a
  metric of the form $|f|^{2/(n-1)} g$ where $f \in C^2(M)$ can have
  some zeros;
\item Inequality \eref{str_in} gives a solution of a conformally
  invariant partial differential equation which can be read as a
  nonlinear eigenvalue equation for the Dirac operator, a type of
  Yamabe problem for the Dirac operator;
\item using Hijazi's inequality \cite{hijazi:91} one obtains a
  solution of the standard Yamabe problem which consists of finding a
  metric with constant scalar curvature in the conformal class of $g$
  in the case of $n \geq 3$.
\end{itemize}

The first two applications can be interpreted as a spin analog of the
Yamabe problem for many reasons, see \cite{ammann:03}. The third
application says that a non-zero mass endomorphism can be used in the
Yamabe problem instead of the positivity of the mass $A^g$ defined in
Subsection~\ref{ss.adm}.
 
Now, let us come back to the subject of this paper. In
\cite{ammann.humbert.morel:06}, we prove that a non-zero mass
endomorphism implies Inequality \eref{str_in}. In particular we see
with Theorem \ref{main} that Inequality \eref{str_in} holds for
generic metric in $\cR_{U,\gfl}(M)$. As a consequence, for generic
metrics in $\cR_{U,\gfl}(M)$, we have all the applications stated
above.

This can be compared to the Yamabe problem: Schoen proved that the
positivity of the number $A^g$, that is the mass of $(M,g)$ defined in
Subsection~\ref{ss.adm}, implies a solution of the standard Yamabe
problem. The positive mass theorem implies that $A^g \geq 0$. Hence, we
get a solution of the Yamabe problem as soon as $A^g \neq 0$. In
particular, the mass endomorphism plays the same role in the Yamabe
problem for the Dirac operator as the mass in the classical Yamabe
problem.

\subsection{Further remarks}

We here discuss extensions of the results in this paper. At first we
ask what can be done without the condition of flatness in a
neighborhood of $p$. For an arbitrary metric on $M$ one possible
extension of our setup is a relative version of the mass endomorphism.

To briefly sketch this relative version, assume that there is a
manifold $(M',g')$ and assume that a point $p'$ has a neighborhood
which is orientation preserving isometric to a neighborhood of $p$ in
$(M,g)$. Using this isometry the difference between the Green's
function $G^g_p$ of $D^g$ on $M$ and the Green's function
$G^{g'}_{p'}$ of $D^{g'}$ on $M'$ is a well-defined smooth spinor in a
neighborhood of $p \cong p'$. Then the relative mass endomorphism is
defined as $G_p(p)-G_{p'}(p') \in \End(\Si_pM) \cong
\End(\Si_{p'}M')$.  The methods of the present work can be modified
such that this mass endomorphism is non-zero for generic metrics $g$
on $M$ which are locally isometric to a fixed metric $g'$ on $M'$
around $p$ and $p'$.

Now we discuss whether the condition $\al(M)=0$ is necessary. If the
manifold $M$ has a non-trivial index, then $\cRinv(M)$ is empty.
Nevertheless an extension is possible. For this $\cRinv(M)$ has to be
replaced by the space of metrics for which the kernel of the Dirac
operator has minimal dimension. For such metrics there are various
choices of ``Green's functions'' for which the mass endomorphism is
generically non-zero, for example if one defines it as being the
integral kernel of the operator $(D+\pi)^{-1}-\pi$ where $\pi$ is the
projection to the kernel.

In \cite{ammann.dahl.hermann.humbert:p10b} we plan to present another
method to prove a variant of Theorem~\ref{mass_to_infty} with slightly 
different conditions and a different potential for
generalization. This other proof uses methods from spectral theory,
and explains that the convergence to infinity of the mass endomorphism
actually can be understood as a pole of a meromorphic function.

\subsection{Overview of the paper}

We here give a short overview of the paper. In Section
\ref{sec_notation} we introduce notation and collect basic facts
concerning spinors and Dirac operators. In Section
\ref{section_existence} we explain how to find one metric with
non-zero mass endomorphism on a given manifold, this uses the results
of the following two sections. In Section \ref{section_mass_to_infty}
we show that under certain assumptions the mass endomorphism tends to
infinity when the Riemannian metric varies and approaches a metric
with harmonic spinors. In Section \ref{section_surgery} we show that
the property of non-zero mass endomorphism can be preserved under
surgery on the underlying manifold. Finally, in Section
\ref{section_exis_to_gen} we use analytic perturbation techniques to
show that the existence of one metric with non-zero mass endomorphism
implies that a generic metric has this property.

\section{Notations and preliminaries}
\label{sec_notation}

\subsection{Notation and some basic facts}

In this article we use the following notations for balls and spheres:
$B^k(R) \definedas \{ x \in \mR^k \,|\, \|x\| < R \}$, $B^k \definedas
B^k(1)$, $S^k(R) \definedas \{ x \in \mR^k \,|\, \|x\| = R \}$, $S^k
\definedas S^k(1)$.

As background for basic facts on spinors and Dirac operators we refer
to \cite{lawson.michelsohn:89} and \cite{friedrich:00}. For the
convenience of the reader we summarize a few definitions and facts.
On a compact Riemannian spin manifold $(M,g)$ one defines the Dirac
operator $D^g$ acting on sections of the spinor bundle. The Dirac
operator is essentially self-adjoint and extends to a self-adjoint
operator $H^1\to L^2$ where $H^1$ is the space of $L^2$-spinors whose
first derivative is $L^2$ as well, and $L^2$ is the space of square
integrable spinors. A smooth spinor is called \emph{harmonic}, if it
is in the kernel of the Dirac operator $D^g$.  Any $L^2$-spinor
satisfying $D^g \phi=0$ in the weak sense, is already smooth, thus it
is a harmonic spinor. If the kernel of $D^g$ is trivial, then the
Dirac operator is invertible with a bounded inverse $L^2 \to H^1$. The
inverse has an integral kernel called the Green's function of $D^g$.
The Green's function of $D^g$ was already used in
Subsection~\ref{submassendo} to define the mass endomorphism.

\subsection{Comparing spinors for different metrics}
\label{identification_section}

Let $g$ and $h$ be Riemannian metrics on the spin manifold $M$. The
goal of this section is to recall how spinors on $(M,g)$ are
identified with spinors on $(M,h)$ using the method of Bourguignon and
Gauduchon \cite{bourguignon.gauduchon:92}, see also
\cite{ammann.dahl.humbert:09}.

Given the metrics $g$ and $h$ there exists a unique 
bundle endomorphism $a^g_h$ of $TM$ which satisfies 
$g(X,Y)=h(a^g_h X, Y)$ for all $X$, $Y \in TM$. It is 
$g$-self-adjoint and positive definite. Define $b^g_h:=
(a^g_h)^{-1/2}$, 
where $(a^g_h)^{1/2}$ is the unique positive pointwise 
square root of $a^g_h$.
The map $b^g_h$ maps $g$-orthonormal frames to $h$-orthonormal
frames and defines an $\SO(n)$-equivariant bundle morphism
$b^g_h: \SO(M,g) \rightarrow \SO(M,h)$ of the principal bundles of
orthonormal frames. The map $b^g_h$ lifts to a $\Spin(n)$-equivariant
bundle morphism $\beta^g_h:\Spin(M,g) \rightarrow \Spin(M,h)$ of the
corresponding spin structures. From this we obtain a homomorphism of
vector bundles
\begin{equation} \label{bourg-map} 
\be^g_h: \Si^g M \to \Si^h M
\end{equation}
which is a fiberwise isometry with respect to the inner products on
$\Si^g M$ and $\Si^h M$. We let the Dirac operator $D^h$ act on
sections of $\Si^g M$ by defining 
\begin{equation*}
  D^h_g
  \definedas
  (\beta^g_h)^{-1} D^{h} \beta^g_h .
\end{equation*}
In \cite[Thm. 20]{bourguignon.gauduchon:92} an expression for $D^h_g$ 
is computed in terms of a local $g$-orthonormal frame $\{ e_i \}_{i=1}^n$. 
The result is
\begin{equation} \label{D^h_g} 
  D^h_g \varphi = \sum_{i=1}^n e_i \cdot \nabla^g_{b^g_{h}(e_i)} \varphi
  +\frac{1}{2}\,\sum_{i=1}^n e_i \cdot 
  ( (b^g_h)^{-1} \nabla^h_{b^g_{h}(e_i)} b^g_h 
  - \nabla^g_{b^g_{h}(e_i)} ) \cdot \varphi ,
\end{equation}
where for any vector field $X$ the operator 
$(b^g_h)^{-1} \nabla^h_X b^g_h - \nabla^g_X$ 
is $g$-antisymmetric and therefore 
considered as an element of the Clifford algebra.
It follows that
\begin{equation} \label{D^h_g_estimate}
  D^h_g \phi
  =
  D^{g} \phi + A^h_g (\nabla^g \phi ) + B^h_g(\phi),
\end{equation}
where $A^h_g$ and $B^h_g$ are pointwise vector bundle maps whose
pointwise norms are bounded by $C |h - g|_g$ and $C (|h - g|_g +
|\nabla^g(h - g)|_g )$ respectively.

\section{Finding one metric with non-vanishing mass endomorphism}
\label{section_existence}

The goal of this section is to prove the following Theorem.

\begin{theorem} \label{existence} Let $M$ be a compact connected spin
  manifold of dimension $n \geq 3$ and let $p \in M$. Assume that
  $\alpha(M) = 0$. Then there exists a neighborhood $U$ of $p$ and a
  flat metric $\gfl$ on $U$ such that $\cRneq(M)$ is non-empty.
\end{theorem}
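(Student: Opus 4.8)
The strategy is to build the desired metric by combining three ingredients corresponding to the three remaining sections of the paper. First, since $\alpha(M)=0$, by the theorem quoted in Subsection~\ref{alpha_genus} the set $\cRinv(M)$ is non-empty, so there is at least one metric on $M$, flat near $p$, with invertible Dirac operator. But this metric need not have non-zero mass endomorphism, so more work is needed. The plan is to produce a \emph{model} manifold on which a metric with non-vanishing mass endomorphism is available, and then transport this property back to $M$ by surgery.

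\smallskip
The first step is to exhibit, in each dimension $n\geq 3$, a closed spin manifold $N$ together with a point $q$, a neighborhood flat around $q$, and a metric for which the mass endomorphism at $q$ is non-zero. Here I would appeal to the dimension-$3$ result of \cite{hermann:10} as a base case and to the result of Section~\ref{section_mass_to_infty} (Theorem~\ref{mass_to_infty}, the statement that the mass endomorphism blows up as the metric approaches one with a harmonic spinor) to manufacture such examples in higher dimensions: one takes a manifold admitting a metric with a non-trivial harmonic spinor but with $\alpha=0$ (for instance by taking products or by the constructions behind Stolz's theorem), perturbs slightly to make the Dirac operator invertible, and invokes Theorem~\ref{mass_to_infty} to conclude that the mass endomorphism is large, in particular non-zero, for metrics close to the degenerate one. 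One must be a little careful that the relevant metrics can be chosen flat around the chosen point; this is arranged by first deforming to be flat near $q$ and then perturbing away from a small ball around $q$, which does not affect the flatness but can be used to kill or create harmonic spinors.

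\smallskip
The second step is the surgery argument of Section~\ref{section_surgery}: any two closed connected spin manifolds of the same dimension are spin-bordant up to the $\alpha$-genus obstruction, and more concretely $M$ can be obtained from $N\sqcup(\text{something with }\alpha=0)$ by a sequence of surgeries of codimension $\geq 3$ performed away from the distinguished point and away from the flat neighborhood. The key input is that such surgeries can be performed so as to preserve both invertibility of the Dirac operator (as in \cite{ammann.dahl.humbert:09}) \emph{and} the non-vanishing of the mass endomorphism at the untouched point $p\cong q$ — this is exactly what Section~\ref{section_surgery} is designed to give. Applying this, starting from the model $(N,q)$ of Step~1 and performing surgeries in the complement of the flat ball, one arrives at the manifold $M$ with a metric $g\in\cRinv(M)$, flat on some neighborhood $U$ of $p$, whose mass endomorphism at $p$ is non-zero; that is, $\cRneq(M)\neq\emptyset$.

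\smallskip
I expect the main obstacle to be the surgery step, specifically controlling the Green's function — and hence the \emph{constant term} in its expansion, not just its invertibility or the size of the lowest eigenvalue — under the surgery/gluing procedure. Keeping the metric genuinely flat and unchanged on a fixed neighborhood of $p$ throughout all surgeries, while the global geometry is drastically modified, requires that the surgeries and the connected-sum necks be localized away from $U$; and one then needs a quantitative statement that a small (in an appropriate norm, away from $U$) perturbation of the metric changes the mass endomorphism by a small amount, so that non-vanishing is preserved. This continuity/stability estimate for the mass endomorphism, together with the norm estimates \eref{D^h_g_estimate} on how $D^h_g$ differs from $D^g$, is the technical heart of the argument and is presumably established in Sections~\ref{section_mass_to_infty} and~\ref{section_surgery}, which this proof will simply cite.
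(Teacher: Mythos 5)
Your overall strategy is the paper's: first manufacture one model manifold carrying a metric, flat near a point, whose mass endomorphism there is non-zero, by letting invertible metrics approach a metric with harmonic spinors (Theorem \ref{mass_to_infty}), and then transport this property to $M$ by the surgery result (Theorem \ref{conv_mass}). However, two of your steps have genuine gaps as stated. In the model step you never address the hypothesis of Theorem \ref{mass_to_infty} that the evaluation map $\ker D^h \ni \psi \mapsto \psi(p)$ be injective; the paper itself stresses that this condition is quite restrictive, and for a generic manifold ``with a harmonic spinor and $\alpha=0$'' (products, Stolz-type examples) it can easily fail, e.g.\ because some non-trivial harmonic spinor vanishes at the chosen point or because $\dim\ker D^h$ exceeds $\dim\Sigma_pM$. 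The paper's choice of model is made precisely to guarantee it: the flat torus $T^n$ with the Lie group spin structure, where $\ker D^{g_0}$ consists of parallel spinors, which are determined by (and nowhere vanishing through) their value at $p$. Moreover the approximating metrics must lie in $\cRinv(T^n)$, i.e.\ coincide with $\gfl$ on $U$ and converge in $C^1$ to the degenerate metric; the paper gets this by interpolating linearly between $g_0$ and a metric $g_1$ with invertible Dirac operator from \cite{ammann.dahl.humbert:09} that is unchanged on $U$, and using analyticity of the eigenvalues in $t$, whereas your ``perturb away from a small ball to kill harmonic spinors'' (essentially \cite{ammann.dahl.humbert:p09}) is left unassembled and does not by itself produce such a $C^1$-convergent family.

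Your bordism step is also incorrect as stated: two spin manifolds of the same dimension with equal $\alpha$-genus need not be spin bordant (the homomorphism $\Omega^{\rm spin}_n\to KO_n$ has a large kernel in general), so ``$M$ is obtained from $N\sqcup(\mbox{something with }\alpha=0)$ by surgery'' is not justified. The paper sidesteps any bordism computation by taking $M_0=T^n\sqcup(-T^n)\sqcup M$, which is tautologically spin bordant to $M$ since $T^n\sqcup(-T^n)$ bounds; it equips the torus summands with the model metric $h_0$ and the $M$ summand with a metric with invertible Dirac operator (available because $\alpha(M)=0$), and then, since $M$ is connected, obtains $M$ from $M_0$ by surgeries of codimension $\geq 2$ avoiding $U$, by \cite[Prop.~4.3]{ammann.dahl.humbert:09}. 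Note also that your requirement of codimension $\geq 3$ is both unnecessary (Theorem \ref{conv_mass} works in codimension $2$) and in general not obtainable from a mere spin bordism, so insisting on it would break the argument. Finally, the dimension-$3$ result of \cite{hermann:10} plays no role and does not feed any induction on dimension; the torus model works uniformly for all $n\geq 3$.
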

 
\begin{proof}
  We start by proving the theorem when the manifold is a torus.
  Consider the torus $T^n$ equipped with the Lie group spin structure
  for which the standard flat metric $g_0$ has a space of parallel
  spinors of maximal dimension. Choose $p \in T^n$ and let $U$ be a
  small open neighborhood of $p$. Further, let $\gfl$ be the
  restriction of $g_0$ to $U$.

  Since $n \geq 3$ we have that $\alpha(T^n) = 0$ so by
  \cite{ammann.dahl.humbert:09} there is a metric $g_1$ on $T^n$ with
  invertible Dirac operator. The construction of $g_1$ is done through
  a sequence of surgeries which starts with the disjoint union of
  $T^n$ and some other manifolds, and ends with the torus $T^n$. These
  surgeries can be arranged so that they do not change the open set
  $U$ in the initial $T^n$, so the resulting metric satisfies $g_1 =
  g_0$ on $U$, or $g_1 \in \cRinv(T^n)$.

  Define the family of metrics $g_t \definedas t g_1 + (1-t)g_0$.
  Since the eigenvalues of $D^{g_t}$ depend analytically on $t$ it
  follows that $D^{g_t}$ is invertible except for isolated values of
  $t$, it follows that $g_t \in \cRinv(T^n)$ except for isolated values
  of $t$. Choose a sequence $t_k \to 0$ for which $g_{t_k} \in
  \cRinv(T^n)$, we can then apply Theorem \ref{mass_to_infty} below to
  the sequence $g_{t_k}$ converging to $g_0$ and conclude that
  $g_{t_k} \in \cRneq(T^n)$ for $k$ large enough. In particular
  $\cRneq(T^n)$ is not empty, and we choose a metric $h_0$ from this
  set.

  Now let $M$ be a manifold of dimension $n$ as in the theorem. Since
  $\alpha(M) = 0$ we know that there is a metric $g$ on $M$ with
  invertible Dirac operator. We consider the disjoint union
  \begin{equation*}
    M_0 = T^n \sqcup (-T^n) \sqcup M.
  \end{equation*}
  Here $-T^n$ denotes $T^n$ with the opposite orientation, so that
  $T^n \sqcup (-T^n)$ is a spin boundary and $M_0$ is spin bordant to
  $M$. Since $M$ is connected it follows that $M$ can be obtained from
  $M_0$ by a sequence of surgeries of codimension $2$ and higher, see
  \cite[Proposition 4.3]{ammann.dahl.humbert:09}. Again, these
  surgeries can be arranged to miss the open set $U$ in the first
  $T^n$. We equip $M_0$ with the Riemannian metric $h_0 \sqcup h_0
  \sqcup g \in \cRneq(T^n \sqcup (-T^n) \sqcup M)$ and when we use
  Theorem \ref{conv_mass} below for the sequence of surgeries we end
  up with a metric $g' \in \cRneq(M)$.

  Finally, the point $p \in M$ we end up with after the sequence of
  surgeries might of course not be equal to the point $p$ in the
  assumptions of the theorem. If we set this right by a diffeomorphism
  we have proved that $\cRneq(M)$ is non-empty.
\end{proof}

Note that this proof does not work in dimension $2$. Indeed, we
strongly use that the $\alpha$-genus of the torus $T^n$ vanishes. This
fact is only true in dimension $n\geq 3$. If the flat torus $T^2$ is
equipped with the Lie group spin structure with two parallel spinors,
then $\alpha(T^2) = 1$. By the way, it is proven in
\cite{ammann.humbert.morel:06} that the mass endomorphism always
vanishes in dimension $2$.

\section{Mass endomorphism of metrics close to a metric with harmonic
  spinors}
\label{section_mass_to_infty}

Finding examples of metrics with non-zero mass endomorphism seems to
be a difficult issue. The only explicit examples we have until now are
the projective spaces $\mR P^n$, $n \equiv 3 \mod 4$, equipped with its
standard metric, see \cite{ammann.humbert.morel:06}. The goal of this
section is to show that metrics $g \in \cRinv (M)$ sufficiently close
to a metric $h \in \cR_{p,U,\gfl} \setminus \cRinv(M)$ will under some
additional assumptions provide such examples. This is the object of
Theorem \ref{mass_to_infty} below, which in our mind has an interest
independently of the application to Theorem \ref{main}.

\begin{theorem} \label{mass_to_infty} Let $U$ be a neighborhood of
  $p\in M$. Assume that $h \in \cR_{U,\gfl}(M)$ has $\ker D^h \neq
  \{0\}$. Further assume that the evaluation map of harmonic spinors
  at $p$,
  \begin{equation*}
    \ker D^h \ni \psi \mapsto \psi(p) \in \Sigma^h_p M,
  \end{equation*}
  is injective. Set $m \definedas \dim$ $\ker D^h$ Let $g_k \in
  \cRinv(M)$, $k = 1,2,\dots$, be a family of metrics on $M$
  converging to $h$ in the $C^1$-topology.

  Then the mass endomorphism $\alpha^{g_k}$ at $p$ has at least $m$
  eigenvalues tending to $\infty$ as $k \to \infty$. In particular,
  $g_k \in \cRneq(M)$ for large $k$.
\end{theorem}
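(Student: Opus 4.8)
The plan is to compare the Green's function of $D^{g_k}$ with the spectral resolution of $D^h$ transported to a fixed bundle, and to show that the divergence of $\alpha^{g_k}$ is driven by the small eigenvalues of $D^{g_k}$ that converge to $0$. Using the Bourguignon--Gauduchon identification $\beta^{g_k}_h$ of Subsection~\ref{identification_section}, I would work on the fixed bundle $\Sigma^h M$ and regard $D^{g_k}_h$ as a perturbation of $D^h$ via \eref{D^h_g_estimate}, so that $\|D^{g_k}_h - D^h\|$ is controlled by $|g_k - h|_{C^1} \to 0$. By analytic (or just elliptic) perturbation theory, the $m = \dim\ker D^h$ eigenvalues of $D^{g_k}_h$ closest to $0$, call them $\mu_1^k,\dots,\mu_m^k$, tend to $0$, and the corresponding normalized eigenspinors $\phi_j^k$ converge (after passing to a subsequence, in $C^1$ on compact sets away from nothing — in fact globally by elliptic estimates) to an orthonormal basis $\psi_1,\dots,\psi_m$ of $\ker D^h$.

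Next I would write the Green's function of $D^{g_k}$ (equivalently, on the fixed bundle, of $D^{g_k}_h$) as a sum over the spectrum:
\begin{equation*}
  G^{g_k}_h(x,y) = \sum_{j=1}^m \frac{1}{\mu_j^k}\,\phi_j^k(x) \otimes \phi_j^k(y)^* + R^k(x,y),
\end{equation*}
where $R^k$ is the contribution of the part of the spectrum bounded away from $0$. The point is that on the flat neighborhood $U$ — where $g_k = h = \gfl$, so the two bundles and operators agree and the singular term $-\frac{1}{\omega_{n-1}|x|^n}x\cdot$ is the same for all $k$ — the remainder $R^k$ stays bounded in $C^0$ near $p$: indeed $R^k$ is the integral kernel of $(D^{g_k}_h)^{-1}$ composed with the spectral projection onto eigenvalues of modulus $\geq \delta$ for some fixed $\delta > 0$, and such kernels are uniformly bounded by elliptic estimates since the $D^{g_k}_h$ converge in operator norm and the resolvent on that spectral part is uniformly bounded. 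Consequently, evaluating the "massive part", i.e. subtracting the fixed singular term and taking $x \to p$,
\begin{equation*}
  \alpha^{g_k}(\psi_0) = \sum_{j=1}^m \frac{1}{\mu_j^k}\,\langle \phi_j^k(p), \psi_0\rangle\,\phi_j^k(p) + O(1).
\end{equation*}
The matrix $P^k \definedas \sum_{j=1}^m \phi_j^k(p)\otimes\phi_j^k(p)^*$ on $\Sigma_p M$ converges to $P \definedas \sum_{j=1}^m \psi_j(p)\otimes\psi_j(p)^*$, which is precisely the Gram matrix of the evaluation map $\ker D^h \to \Sigma^h_p M$; by the injectivity hypothesis this map is injective, so $P$ has rank $m$, i.e. $P$ is positive definite (as an endomorphism of the $\dim\Sigma_pM$-dimensional space it has exactly $m$ positive eigenvalues). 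Since $\mu_j^k \to 0$, each nonzero eigenvalue direction of $P$ contributes an eigenvalue of $\alpha^{g_k}$ of size $\asymp 1/\min_j|\mu_j^k| \to \infty$; a clean way to see this is to restrict the quadratic form $\langle \alpha^{g_k}\psi_0,\psi_0\rangle$ to the $m$-dimensional image of the evaluation map and apply the min-max principle: the $m$ largest eigenvalues of $\alpha^{g_k}$ are bounded below by the $m$-th eigenvalue of $\sum_j \frac{1}{|\mu_j^k|}\langle\phi_j^k(p),\cdot\rangle\phi_j^k(p)$ minus $O(1)$, which tends to $+\infty$. (A small technical point: the $\phi_j^k(p)$ need not be orthogonal, but their Gram matrix converges to the positive definite Gram matrix of the $\psi_j(p)$, so for large $k$ the relevant quadratic form is uniformly comparable to $\frac{1}{\max_j|\mu_j^k|}\,|\Pi\psi_0|^2$ where $\Pi$ projects onto $\operatorname{span}\{\psi_j(p)\}$.) This gives at least $m$ eigenvalues of $\alpha^{g_k}$ tending to $\infty$, hence $\alpha^{g_k}\neq 0$ and $g_k\in\cRneq(M)$ for large $k$.

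The main obstacle I anticipate is the uniform control of the remainder term $R^k$ near $p$: one must rule out that the "regular part" of the Green's function also blows up. This requires showing that the resolvent $(D^{g_k}_h - 0)^{-1}$ restricted to the spectral subspace of eigenvalues bounded away from $0$ has integral kernels that are uniformly bounded in a neighborhood of $p$ — and, crucially, that the identification $\beta^{g_k}_h$ and the change of volume element $dv^{g_k}$ vs $dv^h$ do not spoil this, which is where it matters that $g_k \equiv h$ on $U$ so that all these objects are literally equal near $p$. Making the convergence $\phi_j^k \to \psi_j$ strong enough (at least $C^0$ near $p$, which follows from $L^2$ convergence plus elliptic regularity since the operators converge in a strong enough norm) and handling the non-orthogonality of the limiting evaluations via the hypothesis are the remaining careful points; everything else is standard perturbation theory and min-max.
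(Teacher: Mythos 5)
Your argument is correct in outline, but it follows a genuinely different route from the paper. The paper never decomposes the Green's function spectrally: it fixes one $\psi\in\ker D^h$, writes $G_k\psi(p)=$ singular term $+\,v_k$, and proves blow-up of $v_k$ in three soft steps -- a weighted pairing $\int\<G_k,D^{g_k}\psi\>\,dv^{g_k}=|\psi(p)|^2>0$ combined with $\|D^{g_k}\psi\|_\infty\to 0$ forces $\|\Omega^{n-1}G_k\|_\infty\to\infty$; then $w_k=\beta^{g_k}_h v_k$ is split as $a_k\phi_k+w_k^\perp$ with $\phi_k\in\ker D^h$, and an elliptic estimate using \eref{D^h_g_estimate} shows $w_k^\perp$ stays bounded while $|a_k|\to\infty$; finally injectivity of the evaluation map gives $|\alpha^{g_k}(\psi(p))|\ge a_k(|\phi_k(p)|+o(1))\to\infty$. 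Your route instead isolates the $m$ small eigenvalues $\mu_j^k\to 0$ of $D^{g_k}$, writes $\alpha^{g_k}=\sum_j\frac{1}{\mu_j^k}\,\phi_j^k(p)\otimes\phi_j^k(p)^*+O(1)$, and uses the nondegenerate limiting Gram matrix. What your approach buys is a more transparent proof of the ``at least $m$ eigenvalues'' count (the paper's written argument literally only yields blow-up of $|\alpha^{g_k}(\psi(p))|$ for each fixed $\psi$, i.e.\ of the operator norm, and leaves the multiplicity statement implicit), and it is close in spirit to the meromorphic/pole picture the authors announce in \cite{ammann.dahl.hermann.humbert:p10b}. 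What the paper's approach buys is the avoidance of exactly the two points you yourself flag as delicate: (i) the uniform bound on the regular part at $p$ of the spectrally truncated kernel $R^k$ -- operator-norm convergence of $D^{g_k}_h$ and a uniform resolvent bound on the spectral complement do \emph{not} by themselves bound a kernel pointwise; you must run a local elliptic argument near $p$ (where all operators equal the fixed flat one and the low eigenspinors are uniformly bounded by interior regularity), which is doable but is real work, whereas the paper's Step 2 estimate on $w_k^\perp$ replaces it; and (ii) eigenvalue/eigenspinor convergence under mere $C^1$ convergence of metrics, including the $k$-dependent $L^2(dv^{g_k})$ inner products. One further small correction: the $\mu_j^k$ need not be positive, so your min-max step with $1/|\mu_j^k|$ as written is not literally valid -- with mixed signs some of the divergent eigenvalues of $\alpha^{g_k}$ go to $-\infty$; the clean statement (which suffices for the theorem, read as divergence in absolute value, and follows from invertibility of $\operatorname{diag}(1/\mu_j^k)$ times the Gram matrix) is that all $m$ nonzero eigenvalues of the singular rank-$m$ part diverge in absolute value, and then a Weyl-type perturbation bound with the $O(1)$ remainder, using self-adjointness of $\alpha^{g_k}$, finishes the argument.
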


The proof of this theorem is inspired by the work of Beig and
O'~Murchadha \cite{beig.omurchadha:91}. In the hypothesis of Theorem
\ref{mass_to_infty}, the injectivity of the evaluation map $\ker D^h
\ni \psi \mapsto \psi(p) \in \Sigma^h_p M,$ is quite restrictive: it
is fulfilled for instance when the space of harmonic spinors is
$1$-dimensional if $p$ is not a zero of the harmonic spinor. In
Theorem \ref{existence} we applied the result to the flat torus $T^n$.

\begin{proof}
  For the proof we choose a non-zero $\psi\in \ker D^h$. Set $\psi_p
  \definedas \psi(p) \in \Sigma^h_p M$, by assumption we have $\psi_p
  \neq 0$. We will show that $\al^{g_k}(\psi_p)$ tends to infinity.

  Let $G_k$ be the Green's function of $D^{g_k}$ associated to
  $\psi_p$, that is $G_k$ is a distributional solution of
  \begin{equation*}
    D^{g_k} G_k =  \de_p \psi_p.
  \end{equation*}
  In coordinates around $p$ we write (compare Proposition
  \ref{prop.before.mass})
  \begin{equation} \label{formel.green} G_k = -\eta \frac{x}{\om_{n-1}
      r^n} \cdot \psi_p + v^{g_k} (\psi_p).
  \end{equation}
  Here $\eta$ is a cutoff function which is equal to $1$ near $p$ and
  has support in $U$. We shorten notation by writing $v_k$ for the
  spinor field $v^{g_k}( \psi_p)$.

  {\bf Step 1.} {\em We show that there are $p_k \in M$ for which
    $|v_k(p_k)| \to \infty$.} Let the smooth function $\Omega: M
  \setminus \{p\} \to (0,1]$ satisfy
  \begin{equation*}
    \Omega(x) =
    \begin{cases}
      r(x)    & \text{if $x \in B_p(\epsilon)$},\\
      1 & \text{if $x \in M \setminus B_p(2\epsilon)$ }.
    \end{cases}
  \end{equation*}
  Note that $\Omega$ does not depend on $k$. We have
  \begin{equation*}
    \begin{split}
      0 < |\psi_p|^2 &=
      \int_M \<G_k, D^{g_k} \psi \> \, dv^{g_k} \\
      &= \int_M \frac{1}{\Omega^{n-1}}
      \<\Omega^{n-1} G_k, D^{g_k} \psi \> \, dv^{g_k} \\
      &\leq \int_M \frac{1}{\Omega^{n-1}} \, dv^{g_k} \| \Omega^{n-1}
      G_k \|_{\infty} \| D^{g_k} \psi \|_{\infty}.
    \end{split}
  \end{equation*}
  As the integral is bounded and the last factor tends to zero as $k
  \to \infty$, we conclude that
  \begin{equation*}
    \lim_{k \to \infty} \| \Omega^{n-1} G_k \|_{\infty} = \infty.
  \end{equation*}
  Let $p_k$ be points for which
  \begin{equation*}
    |\Omega^{n-1}(p_k) G_k(p_k)| = \| \Omega^{n-1} G_k \|_{\infty} .
  \end{equation*}
  Then
  \begin{equation*}
    \Omega^{n-1}(p_k) G_k(p_k) 
    = 
    \Omega^{n-1}(p_k) \left( 
      -\eta \frac{x}{\om_{n-1} r^n} \cdot \psi_0
    \right) (p_k)
    + 
    \Omega^{n-1}(p_k)v_k(p_k), 
  \end{equation*}
  here the first term on the right hand side is bounded so the second
  term must tend to infinity. Since $|\Omega^{n-1}(p_k)v_k(p_k)| \leq
  |v_k(p_k)|$ we conclude that $|v_k(p_k)| \to \infty$ as $k \to
  \infty$, and Step~1 is proven.

  To the spinor $v_k$ which is a section of $\Si^{g_k} M$ the map
  $\be^{g_k}_h$ described in (\ref{bourg-map}) associates a section
  $w_k \definedas \be^{g_k}_h v_k$ in the spinor bundle $\Si^{h} M$.
  We decompose this section as
  \begin{equation*}
    w_k 
    =
    a_k \phi_k + w_k^{\perp}  
  \end{equation*}
  where $\phi_k \in \ker D^h$ is normalized to have $\| \phi_k
  \|_{L^p(\Si^h M)} = 1$ , $a_k \in \mR$, and $w_k^{\perp}$ is
  orthogonal to $\ker D^h$. We choose $p$ large enough so that
  $H_1^p(\Si^h M)$ embeds into $C^0(\Si^h M)$.

  {\bf Step 2.} {\em We show that $|a_k| \to \infty$.}  For a
  contradiction assume that the sequence $|a_k|$ is bounded.  {}From
  (\ref{formel.green}) it follows that $D^{g_k} v_k = \grad \eta
  \cdot \frac{x}{\om_{n-1} r^n}\cdot \psi_p$.
  This together with the
  properties of $\be^{g_k}_h$ gives
  \begin{equation} \label{alpha_perp_estimate}
    \begin{split}
      \| w_k^{\perp} \|_{H_1^p} &\leq
      C \| D^h w_k^{\perp} \|_{L^p} \\
      &=
      C \| D^h w_k \|_{L^p} \\
      &=
      C \| (\be^{g_k}_h)^{-1} D^h \be^{g_k}_h v_k \|_{L^p} \\
      &=
      C \| D^h_{g_k} v_k \|_{L^p} \\
      &\leq C \| D^{g_k} v_k \|_{L^p}
      + C \| A^h_{g_k} (\nabla^{g_k} v_k ) + B^h_{g_k}( v_k ) \|_{L^p} \\
      &\leq C \| \grad \eta \cdot \frac{x}{\om_{n-1} r^n}\cdot \psi_p
      \|_{L^p} + C \epsilon_k \| w_k \|_{H_1^p},
    \end{split}
  \end{equation}
  here the first term is bounded and $\epsilon_k \to 0$ by our
  assumption that $g_k \to h$ in the $C^1$-topology. By assumption we
  also have
  \begin{equation*}
    \begin{split}
      \| w_k \|_{H_1^p} &\leq \| a_k \phi_k \|_{H_1^p} +
      \| w_k^{\perp} \|_{H_1^p} \\
      &\leq
      C + \| w_k^{\perp} \|_{H_1^p}. \\
    \end{split}
  \end{equation*}
  Together this gives
  \begin{equation*}
    \| w_k^{\perp} \|_{H_1^p}
    \leq
    C + C \epsilon_k  + C \epsilon_k \| w_k^{\perp} \|_{H_1^p},
  \end{equation*}
  so $\| w_k^{\perp} \|_{H_1^p}$ is bounded. We conclude that $\|
  w_k^{\perp} \|_{C^0}$ is bounded, and the assumption that $|a_k|$ is
  bounded then tells us that $\| w_k \|_{C^0} = \| v_k \|_{C^0}$ is
  bounded. This contradicts Step~1, so we have proved Step~2.

  {\bf Step 3.} {\em Conclusion.}  Set $\omega_k \definedas a_k^{-1}
  w_k $ and $\omega_k^{\perp} \definedas a_k^{-1} w_k^{\perp} $ so
  that
  \begin{equation*}
    \omega_k = \phi_k + \omega_k^{\perp}.
  \end{equation*}
  Then (\ref{alpha_perp_estimate}) tells us that
  \begin{equation*}
    \| \omega_k^{\perp} \|_{H_1^p}
    \leq
    C a_k^{-1}
    \| \grad \eta \cdot \frac{x}{\om_{n-1} r^n} \cdot \psi_0 \|_{L^p}
    +
    C \epsilon_k \| \omega_k \|_{H_1^p},
  \end{equation*}
  where the first term now tends to zero. Since the $\phi_k$ are in
  $\ker D^h$ and they are normalized in $L^p(\Si^h M)$ it follows that
  they are bounded in $H_1^p(\Si^h M)$. From this we get
  \begin{equation*}
    \begin{split}
      \| \omega_k \|_{H_1^p} &\leq \| \phi_k \|_{H_1^p} +
      \| \omega_k^{\perp} \|_{H_1^p} \\
      &\leq C +
      \| \omega_k^{\perp} \|_{H_1^p} . \\
    \end{split}
  \end{equation*}
  It follows that
  \begin{equation*}
    \| \omega_k^{\perp} \|_{H_1^p}
    \leq
    o(1) + C\epsilon_k \| \omega_k^{\perp} \|_{H_1^p} 
  \end{equation*}
  so $\| \omega_k^{\perp} \|_{H_1^p} \to 0$ and $\| \omega_k^{\perp}
  \|_{C^0} \to 0$. Finally we have
  \begin{equation*}
    \begin{split}
      |\al^{g_k} (\psi_p)| &=
      |v_k(p)| \\
      &=
      |w_k(p)| \\
      &=
      a_k |\omega_k(p)| \\
      &\geq
      a_k ( |\phi_k (p)| - | \omega_k^{\perp}(p)|) \\
      &=
      a_k ( |\phi_k (p)| + o(1) ). \\
    \end{split}
  \end{equation*}
  By our assumption that the evaluation map of harmonic spinors at $p$
  is injective we know that $|\phi_k (p)|$ cannot tend to zero, so
  from Step~2 we conclude that $|\al^{g_k} (\psi_p)| \to \infty$. This
  finishes the proof of Step~3 and the Theorem.
 
\end{proof}

\section{Surgery and non-zero mass endomorphism}
\label{section_surgery}

Let $\wihat M$ be obtained from $M$ by surgery of codimension at least
$2$. We assume that $p\in M$ is not hit by the surgery, so we have
$p\in \wihat M$. As before $\cRneq(M)$ denotes the metrics with
invertible Dirac operator on $M$ which coincide with the flat metric
$\gfl$ on $U$ and whose mass endomorphism at $p$ is not zero.  The
goal of this section is to prove that $\cRneq(M)\neq \emptyset$
implies $\cRneq(\wihat M)\neq \emptyset$.

We start with a manifold $M$ of dimension $n$ and a point $p \in M$.
We will perform a surgery of dimension $k \in \{ 0,\cdots n-2\}$ on
$M$. For this construction, we follow the beginning of Section 3 in
\cite{ammann.dahl.humbert:09} and use the same notation. So, we assume
that we have an embedding $i: S^k \to M$ with a trivialization of the
normal bundle of $S \definedas i(S^k)$ in $M$, which thus can be
identified with $S^k \times \mR^{n-k}$. The normal exponential map
then defines an embedding of a neighborhood of the zero section of the
normal bundle of $S$, in other words for small $R>0$ the normal
exponential map defines a diffeomorphism $f$ from $S^k\times
B^{n-k}(R)$ to an open neighborhood of $S$, and $f$ is an extension of
$S^k\times\{0\}\to S^k \stackrel{i}{\to} M$. Furthermore, for
sufficiently small $R>0$, the distance from $f(x,y)$ to $S =
f(S^k\times \{0\})$ is $|y|$.

As before we assume that $U$ is an open neighborhood of $p$, on which
a flat extendible metric $\gfl$ exists. We assume further that $p
\not\in S$, and by possibly restricting $U$ to a smaller open set, we
can also assume that $\overline U \cap S = \emptyset$. Thus for small
$R>0$ one obtains
\begin{equation*}
  U \cap f(S^k\times \overline{B^{n-k}(R)}) = \emptyset.
\end{equation*}
As in Section 1 of \cite{ammann.dahl.humbert:09} we define
\begin{equation*}
  \wihat{M} 
  = 
  \left( M \setminus f(S^k \times \overline{B^{n-k}(R)}) \right) 
  \cup \left(\overline{B^{k+1}}\times S^{n-k-1}\right)/{\sim},
\end{equation*}
where $\sim$ identifies the boundary of $\overline{B^{k+1}} \times
S^{n-k-1}$ with $f(S^k \times S^{n-k-1}(R))$ via the map $(x,y)
\mapsto f(x,Ry)$. Our constructions are carried out such that $U$ is
both a subset of $M$ and $\wihat M$.

The main result of this section is the following Theorem.
\begin{theorem}\label{conv_mass}
  If $\cRneq(M)\neq \emptyset$, then $\cRneq(\wihat M) \neq
  \emptyset$.
\end{theorem}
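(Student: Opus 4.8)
The plan is to transplant a good metric from $M$ to $\wihat M$ through the Ammann--Dahl--Humbert surgery construction with a degenerating neck, and then to show that the mass endomorphism at $p$ survives the limit. So I would pick $g\in\cRneq(M)$ together with a spinor $\psi_0\in\Sigma_p M$ for which $\al^g(\psi_0)\neq 0$, and let $G\definedas G^g$ be the Green's function of $D^g$ at $p$ associated to $\psi_0$. Following \cite{ammann.dahl.humbert:09} and the notation of Section~\ref{section_surgery}, one modifies $g$ in a small tubular neighborhood $V\subset M\setminus\overline U$ of $S$ and glues in a handle carrying a thin neck of size $\rho$; this produces, for each small $\rho>0$, a metric $g_\rho$ on $\wihat M$ with $g_\rho=\gfl$ on $U$ (so $g_\rho\in\cR_{U,\gfl}(\wihat M)$), with $g_\rho=g$ on the fixed open set $W\definedas M\setminus V$ which contains $U$, and — because the surgery has codimension at least $2$ — with $D^{g_\rho}$ invertible and $\|(D^{g_\rho})^{-1}\|_{L^2\to L^2}\le C$ uniformly in $\rho$. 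This uniform invertibility is the output of the ADH surgery machinery; the codimension hypothesis enters through the absence of small eigenvalues of the Dirac operator on the shrinking normal sphere $S^{n-k-1}_\rho$.

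Next I would compare Green's functions by cutting and pasting. Fix a cutoff $\chi$ with $\chi\equiv 1$ on a ball $B_p(\delta)$ and $\supp\chi\subset U$, and set $F\definedas-\grad\chi\cdot G$, a fixed smooth spinor supported in $U\setminus B_p(\delta)$, hence disjoint from $p$ and from the surgery region. On $\wihat M$ the Leibniz rule gives $D^{g_\rho}(\chi G)=\de_p\psi_0-F$ (using $g_\rho=g$ on $\supp\chi$ and $\chi\equiv 1$ near $p$), so $r_\rho\definedas G^{g_\rho}-\chi G$ solves $D^{g_\rho}r_\rho=F$; the singular parts of $G^{g_\rho}$ and $\chi G$ cancel, so $r_\rho$ is in fact smooth on $\wihat M$ and $\|r_\rho\|_{L^2(\wihat M)}\le C\|F\|_{L^2}$. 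Since $\chi\equiv1$ near $p$ we have $G^{g_\rho}=G+r_\rho$ there, and comparing with Proposition~\ref{prop.before.mass} yields
\begin{equation*}
  \al^{g_\rho}(\psi_0)=\al^g(\psi_0)+r_\rho(p).
\end{equation*}
Thus everything reduces to proving that $r_\rho(p)\to 0$ as $\rho\to 0$.

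To see this, observe that on $W$, where the metric is the fixed $g$, the $r_\rho$ solve the fixed elliptic equation $D^g r_\rho=F$ and are bounded in $L^2(W)$, hence bounded in $C^0_{\rm loc}(W)$ by interior elliptic regularity; a subsequence converges in $C^0_{\rm loc}(W)$ to some $r_\infty$ with $D^g r_\infty=F$ on $W$. The crucial point is to identify $r_\infty$: using the gluing/decomposition estimates of \cite{ammann.dahl.humbert:09} — namely that an $L^2$-bounded family of (approximately) harmonic spinors on $\wihat M$ asymptotically carries no $L^2$-mass on the degenerating neck and handle — one shows that $r_\infty$ extends across the surgery locus $S$, which has codimension at least $2$ and is therefore removable for the Dirac equation, to an $L^2$ solution of $D^g r_\infty=F$ on all of $M$. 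Since $D^g$ is invertible and $D^g\big((1-\chi)G\big)=F$ with $(1-\chi)G\in L^2(M)$, this forces $r_\infty=(1-\chi)G$, which vanishes on $B_p(\delta)$; hence $r_\infty(p)=0$, and as the limit is independent of the subsequence we conclude $r_\rho(p)\to 0$. Therefore $\al^{g_\rho}(\psi_0)\to\al^g(\psi_0)\neq 0$, so $g_\rho\in\cRneq(\wihat M)$ for all sufficiently small $\rho$, which proves the theorem.

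The one genuinely nontrivial ingredient is the identification of $r_\infty$, i.e. the assertion that the Green's functions of $D^{g_\rho}$ converge, away from the surgery region, to the Green's function of $D^g$ on $M$. This rests on the ADH neck estimates — uniform invertibility together with the vanishing of $L^2$-mass on the degenerating handle — and this is precisely where the codimension-at-least-$2$ hypothesis is used; the remaining steps are routine cut-and-paste with cutoff functions and standard interior elliptic estimates.
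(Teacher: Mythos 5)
Your overall strategy is the same as the paper's: push $g$ through the surgery construction of \cite{ammann.dahl.humbert:09}, obtain a uniform spectral bound for the surgered metrics, and show that the Green's functions, hence the mass endomorphisms at $p$, converge; your cutoff placed near $p$ (producing a fixed right-hand side $F$) instead of the paper's cutoff $\eta_\de$ collapsing onto the surgery sphere is a legitimate variant of the comparison. However, two of your inputs are not actually available in the form you use them. First, the uniform bound $\|(D^{g_\rho})^{-1}\|_{L^2\to L^2}\le C$ independent of $\rho$ is \emph{not} simply ``the output of the ADH surgery machinery'': Theorem 1.2 of \cite{ammann.dahl.humbert:09} gives invertibility of $D^{g_\rho}$ for each small $\rho$, but no uniform control of the inverse. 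To get the uniform spectral gap \eref{sgap} one has to reopen the ADH contradiction argument and run it with spinors satisfying $D^{g_{\de_k}}\psi_k=\la_k\psi_k$, $\la_k\to 0$, rather than harmonic spinors; this is exactly the extra step the paper carries out, and it cannot be quoted as a black box.

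Second, and more seriously, your identification of the limit $r_\infty$ fails because you keep the modification region $V$ fixed. With $g_\rho=g$ only on the fixed set $W=M\setminus V$, the limit satisfies $D^g r_\infty=F$ on $W$ together with an $L^2$ bound, and that does not determine it: one can add any $L^2$ solution of the homogeneous equation on $W$, since there is no boundary condition. Removability of $S$ does not rescue this, because on the fixed annular region $V\setminus S$ the metrics $g_\rho$ agree with the product-form modification of $g$, not with $g$ itself, so the limiting equation there is with respect to the wrong metric; the ``no $L^2$-mass on the neck'' estimates concern only the $\rho$-scale handle, not this fixed annulus. The cure is precisely the point the paper emphasizes: in the ADH construction the number $R_{\rm max}$ can be chosen arbitrarily small, so one may arrange $g_\de=g$ on $M\setminus U_S(\de)$ with $\de\to 0$. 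Then the comparison spinors converge in $C^1_{\rm loc}(M\setminus S)$ to an $L^2$ spinor satisfying the $g$-Dirac equation on all of $M\setminus S$, the removable-singularity lemmas of \cite{ammann.dahl.humbert:09} apply across $S$ (here codimension $\ge 2$ is used), and invertibility of $D^g$ pins the limit down, yielding \eref{diff_mass}. With shrinking modification regions your cut-off-at-$p$ comparison does go through, but as written the argument has these two genuine gaps.
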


%
%
%
%

\begin{proof}
  We assume the requirements for $p$, $U$, $f$ and $k$ stated at the
  beginning of this section, and let $g\in \cRneq(M)$. The goal is to
  construct a metric $\wihat{g}\in\cRneq(\wihat M)$ following the
  constructions in \cite{ammann.dahl.humbert:09}.

  Theorem 1.2 in \cite{ammann.dahl.humbert:09} allows us to construct
  a metric $\wihat{g}'$ on $\wihat{M}$ with invertible Dirac operator.
  We recall the scheme of the proof of this theorem.  As in the
  beginning of Section 3 of \cite{ammann.dahl.humbert:09} we define
  open neighborhoods $U_S(r)$ by
  \begin{equation*}
    U_S(r)
    \definedas
    f (S^k \times B^{n-k}(r))
  \end{equation*}
  for small $r$. Then we construct a family of metrics $(g_\rho)_\rho$
  satisfying $g_\rho = g$ on $M\setminus U_S(R_{\rm{max}})$ for some
  small number $R_{\rm{max}}$. This family of metrics is constructed
  in two steps. First, we use Proposition 3.2 in
  \cite{ammann.dahl.humbert:09} to assume that $g$ has a product form
  in a neighborhood of $S$. Then, we do the construction of Section
  3.2 in \cite{ammann.dahl.humbert:09} to get $g_\rho$. Once these
  metrics $(g_\rho)$ are constructed, we proceed by contradiction. We
  take a sequence $(\rho_k)_{k\in \mN}$ tending to $0$ and we assume
  that $\ker(D^{g_{\rho_k}} ) \neq 0$ for all $k$, that is
  \begin{equation} \label{assumpH1} \forall k \in \mN, \, \hbox{ there
      exists a harmonic spinor } \psi_k \neq 0 \hbox{ on }
    (\wihat{M},g_{\rho_k}) .
  \end{equation} 
  By showing that $\lim_{k\to\infty} \psi_k$ converges in a weak sense
  to a non-zero limit spinor in $\ker D^g$, we will obtain a
  contradiction. So the metric $\wihat{g'} \definedas g_\rho$
  satisfies the requirements of Theorem 1.2 in
  \cite{ammann.dahl.humbert:09} as soon as $\rho$ is small enough.

  This proof actually allows us to require an additional property for
  the metrics $g_\delta$, and make weaker assumptions on the spinors
  $\psi_k$.
  \begin{itemize}
  \item The number $R_{\rm{max}}$ in the proof can be chosen
    arbitrarily small. So set $\de=R_{\rm{max}}$ and choose $\rho
    \definedas \rho(\de)$ small enough so that $g_\de = g_{\rho}$ has
    an invertible Dirac operator. We obtain in this way a family of
    metrics $(g_\de)_{\de \in (0,\de_0)}$ for some $\de_0>0$ such that
    all $D^{g_\de}$ are invertible and such that $g_\de = g$ on $M
    \setminus U_S(\de)$.
  \item Let now $(\de_k)_{k \in \mN}$ be a sequence of positive
    numbers going to $0$. We make the following assumption:
    \begin{equation*}
      \begin{array}{c} 
        \forall k \in \mN, \, \hbox{there exists a spinor } \psi_k \hbox{ on }
        (\wihat{M},g_{\de_k}) 
        \hbox{ and a sequence } \\
        \la_k \hbox{ converging to } 0 \hbox{ such that }
        D^{g_{\de_k}}  \psi_k = \la_k \psi_k.  
      \end{array}
    \end{equation*}
    Working with these spinors instead of the ones given by assumption
    \eref{assumpH1}, the same contradiction is obtained. This proves
    that there is a uniform spectral gap for
    $(g_\de)_{\de\in(0,\de_0/2)}$, or in other words that there exists
    a constant $C_0 > 0$ independent of $\de \in (0,\de_0/2)$ such
    that
    \begin{equation} \label{sgap} \spec{D^{g_{\delta}}} \cap
      [-C_0,C_0] = \emptyset.
    \end{equation}   
  \end{itemize}
 
  Now, we prove that the metric $\wihat{g} \definedas g_\de$ for
  $\delta$ small enough satisfies the requirements of
  Theorem~\ref{conv_mass}. It is already clear that $D^{g_\de}$ is
  invertible for $\de$ small enough, and that $g_\de$ is flat on $U$
  for $\de$ small enough. It remains to show that $\al_p^{g_\de} \neq
  0$ for $\de$ small enough. For this purpose we show that
  $\al_p^{g_\de} \to \al_p^g$ as $\de \to 0$. Since we assume $\al_p^g
  \neq 0$ this gives the desired result.

  So let us prove this fact. First, choose $\psi_0 \in \Sigma_p^g(M)=
  \Sigma_p^{g_\de}(M)$. To simplify the notation, set $\gamma
  \definedas G^{g} \psi_0$ and $\gamma_{\de} \definedas G^{g_{\de}}
  \psi_0$. The proof will be complete if we prove that
  \begin{equation} \label{diff_mass} \lim_{\de \to 0} \gamma(p) -
    \gamma_{\de}(p) = 0.
  \end{equation} 
  Note that the spinor $\gamma - \gamma_{\de}$, defined on $M\setminus
  (\{p\}\cup U_S(\delta))$, is smooth and extends smoothly to $p$.
  Indeed, it is equal on $U$ to $ v_p^g(x) \psi_0 - v_p^{g_\de}(x)
  \psi_0$ (with the notations of Proposition \ref{prop.before.mass}
  and Definition \ref{def.mass}). Let $\eta_{\de} \in
  C^{\infty}(\wihat{M})$, $0 \leq \eta_\de \leq 1$ be a cut-off
  function such that $\eta_\de = 1$ on $M \setminus U_S(3 \de)$ and
  $\eta_\de = 0$ on $U_S(2 \de)$. Since on $\supp(\eta_\de) \subset
  \wihat{M} \setminus U_S(2 \de) = M \setminus U_S(2 \delta)$ we have
  $g_\de = g$ we may assume that
  \begin{equation} \label{nablaeta} |d \eta_\de|_g =
    |d\eta_\de|_{g_\de} \leq \frac{2}{\de}.
  \end{equation}  
  {}From Equation (\ref{sgap}), we have
  \begin{equation*}
    C_0^2 
    \leq 
    \frac{ \int_{\wihat{M} } |D^{g_\de } \phi_\de|^2_{g_\de} dv^{g_\de}}
    {\int_{\wihat{M} } |\phi_\de|^2_{g_\de} dv^{g_\de}} 
  \end{equation*}
  for all smooth non-zero spinors $\phi_\de$ on $(\wihat{M},g_\de)$.
  We evaluate this quotient for $\phi_\de \definedas \eta_\de \gamma -
  \gamma_\de$.  Note that $\phi_\de$ is well defined on $(\wihat
  M,g_\de)$ and smooth since $\gamma$ is well defined on
  $\supp(\eta_\de)$.  Since $\gamma$ and $\gamma_\de$ are harmonic, we
  have $D\phi_\de =d\eta_\de \cdot \gamma$, and since $g_\de= g$ on
  $\supp(\eta_\de)$, we get from Equation (\ref{nablaeta}) that
  \begin{equation*}
    \begin{split} 
      \int_{\wihat{M} } |D^{g_\de } \phi_\de|^2_{g_\de} dv^{g_\de} &=
      \int_{\wihat{M} } |d\eta|^2_g |\gamma|_g^2 dv^g\\
      &\leq \frac{4}{\de^2} \sup_{x \in U_S(3 \delta_0)}\left(
        |\gamma(x)|^2 \right) \Vol^g \left(U_S(3 \de) \setminus
        U_S(2\de)\right).
    \end{split}
  \end{equation*}
  We have that $\Vol^g\left(U_S(3 \de) \setminus U_S(2\de)\right) \leq
  C\de^{n-k}$ where we used the convention (used throughout this
  proof) that $C$ is a positive constant independent of $\de$. Since
  $k\leq n-2$, this leads to
  \begin{equation*} 
    \int_{\wihat{M} } |D^{g_\de } \phi_\de|^2_{g_\de} dv^{g_\de}  \leq C.
  \end{equation*}
  Since $\eta_\de = 1 $ on $M \setminus U_S(3\de)$ and since $g_\de =
  g$ on this set, it follows that
  \begin{equation} \label{den1} \int_{M \setminus U_S(3\de)}
    |\phi_\de|^2_{g_\de} dv^{g_\de} \leq C .
  \end{equation} 

  Now, we proceed as in step 2 of the proof of Theorem 1.2 in
  \cite{ammann.dahl.humbert:09}. Let $Z > 0$ be a large integer.  By
  (\ref{den1}) the set $\{ \phi_\de \}_{\de >0}$ is bounded in $L^2(M
  \setminus U_S(1/Z))$. By Lemma 2.2 in \cite{ammann.dahl.humbert:09}
  it follows that $\{ \phi_\de \}_{\de >0}$ is bounded in $C^{1,\al}(M
  \setminus U_S(2/Z))$ for all $\al$. We apply Ascoli's Theorem and
  conclude there is a subsequence $(\phi_{\de_k})$ of $\{ \phi_\de
  \}_{\de >0 }$ which converges in $C^1(M \setminus U_S(2/Z))$ to a
  spinor $\Phi_0$. Similarly we construct further and further
  subsequences of $(\phi_{\de_k})$ converging to $\Phi_i$ in $C^1(M
  \setminus U_S(2/(Z + i)))$. Taking a diagonal subsequence of these
  subsequences, we obtain a subsequence $(\phi_{\de_k})$ which
  converges in $C^1_{\rm{loc}}(M \setminus S)$ to a spinor $\Phi$. As
  $\phi_{\de}$ is $D^g$-harmonic on $(M\setminus U_S(3\de))$ the
  $C^1_{\rm{loc}}(M \setminus S)$-convergence implies that $D^g \Phi =
  0$ on $M\setminus S$.  With (\ref{den1}) we conclude that $\Phi \in
  L^2 (M)$.  Thus $\Phi$ is $L^2$ and smooth on $M\setminus S$. The
  equation $D^g\Phi=0$ holds on $M\setminus S$. We now apply Lemmas
  2.1 and 2.4 of \cite{ammann.dahl.humbert:09} and conclude that
  $\Phi$ is smooth on $(M,g)$ and $D\Phi=0$ on $M$.  Since $\ker
  D_0=0$, we get that $\Phi \equiv 0$ and in particular $\Phi(p)=0$.
  This implies Equation (\ref{diff_mass}).
\end{proof}

\section{From existence to genericity}
\label{section_exis_to_gen}

The goal of this section is to prove the following Theorem.

\begin{theorem} \label{exis_to_gen} Let $M$ be a compact spin manifold
  of dimension $n$, $n \geq 3$, let $p \in M$ and let $U$ be a
  neighborhood of $p$. If $\cR_{p,U,\gfl}^{\neq 0} (M)$ is non-empty
  then it is generic in $\cR_{U,\gfl}(M)$.
\end{theorem}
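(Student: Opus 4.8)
The plan is to prove Theorem~\ref{exis_to_gen} in two parts, openness and density, using analytic perturbation theory for the family of Dirac operators and for the mass endomorphism.

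\medskip

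\textbf{Openness in the $C^1$-topology.} Suppose $g_0 \in \cRneq(M)$. The condition $g \in \cRinv(M)$ is open in $C^1$: the eigenvalues of $D^g$ depend continuously on $g$ in the $C^1$-topology (using the comparison of Dirac operators via $\be^g_h$ from \eref{D^h_g_estimate}, the difference $D^h_g - D^g$ is bounded by $C(|h-g|_g + |\nabla^g(h-g)|_g)$), so invertibility persists under small $C^1$-perturbations. Next, for $g$ near $g_0$ in $\cRinv(M)$, the Green's function $G^g$, and hence the mass endomorphism $\al^g$ at $p$, depends continuously on $g$: one writes $(D^g)^{-1}$ as a continuous family of bounded operators $L^2 \to H^1$, and extracts the constant term in the expansion at $p$ by the formula in Definition~\ref{def.mass}, using elliptic regularity on the region $U$ where all these metrics agree with $\gfl$ so that $v^g$ is $D^g$-harmonic there and local elliptic estimates give $C^1_{\mathrm{loc}}$-control near $p$ of $v^g$ from $L^2$-control of $G^g$ away from $p$. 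Thus $g \mapsto \al^g \in \End(\Si_p M)$ is $C^1$-continuous on $\cRinv(M)$, and since $\al^{g_0} \neq 0$ this is an open condition.

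\medskip

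\textbf{Density in the $C^\infty$-topology.} This is the main obstacle. Let $g_1 \in \cR_{U,\gfl}(M)$ be arbitrary; we must approximate it in $C^\infty$ by metrics in $\cRneq(M)$. Fix $g_0 \in \cRneq(M)$ (non-empty by hypothesis) and consider the line segment $g_t = (1-t)g_1 + t g_0$, $t \in [0,1]$, which lies in $\cR_{U,\gfl}(M)$ since $g_0 = g_1 = \gfl$ on $U$. By real-analyticity of the Dirac spectrum along this real-analytic family (as already used in the proof of Theorem~\ref{existence}), $D^{g_t}$ is invertible for all but isolated $t$, and the map $t \mapsto \al^{g_t}$ is real-analytic on the complement of these isolated points, with possible poles at them. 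Since $\al^{g_0} \neq 0$ at $t = 1$, the real-analytic matrix-valued function $t \mapsto \al^{g_t}$ is not identically zero on the connected real-analytic locus, hence vanishes only at isolated values of $t$. Therefore $g_t \in \cRneq(M)$ for all $t$ outside a discrete set, and in particular for a sequence $t_j \to 0$, giving $g_{t_j} \to g_1$ in $C^\infty$ with $g_{t_j} \in \cRneq(M)$. This proves density.

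\medskip

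\textbf{Remarks on the delicate point.} The subtlety is the interaction between the two types of degeneration along $g_t$: at isolated $t$ the Dirac operator may acquire kernel, so $\al^{g_t}$ is a priori undefined there; one must be careful that the set of bad $t$ is genuinely discrete. The key input is that along a real-analytic path of metrics the relevant analytic objects (resolvent, spectral projections, the Green's function away from the kernel, and the extracted constant term $\al^{g_t}$) extend to meromorphic families in a complex neighborhood, by Kato's analytic perturbation theory combined with the local elliptic theory on $U$ used above for the expansion at $p$; a meromorphic $\End(\Si_pM)$-valued function that is non-zero somewhere is non-zero off a discrete set. If one prefers to avoid meromorphy at the bad points, an alternative is to first perturb $g_1$ slightly within $\cRinv(M)$ (possible, and with $\cRinv(M)$ itself $C^\infty$-dense by the real-analyticity remark following the Theorem in Subsection~\ref{alpha_genus}), reducing to the case where the whole segment $g_t$ stays in $\cRinv(M)$ after a further generic choice, and then $t \mapsto \al^{g_t}$ is genuinely real-analytic with $\al^{g_0}\neq 0$, hence non-zero off a discrete set of $t$.
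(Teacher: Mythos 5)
Your overall strategy is the same as the paper's: $C^1$-continuity of $g\mapsto\al^g$ on $\cRinv(M)$ for openness, and real-analytic one-parameter families plus analytic continuation for density (the paper makes the continuity step precise via $Q_k=-(D^{g_k}_g)^{-1}P_kG^g$ with $P_k=D^{g_k}_g-D^g$, cutting off the singularity of $G^g$ at $p$ using $g_k=g=\gfl$ on $U$ and Kato's perturbation theorem; your sketch is the same idea). However, your main density paragraph has a genuine gap as written. Along the segment $g_t$ the set $T$ of parameters where $D^{g_t}$ is not invertible is indeed discrete (since $D^{g_0}$ is invertible), but then $(0,1]\setminus T$ is a disjoint union of intervals, \emph{not} connected. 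Real-analyticity of $t\mapsto\al^{g_t}$ off $T$ together with $\al^{g_0}\neq 0$ only shows that the zeros are isolated in the component of $(0,1]\setminus T$ containing $t=1$; since $\al^{g_t}$ is simply undefined at points of $T$, real-analytic continuation cannot cross them, and nothing in your argument excludes $\al^{g_t}\equiv 0$ on the component adjacent to $t=0$ -- which is exactly the component you need in order to approximate $g_1$.

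Your remark does identify this delicate point, and your fix (a) is precisely the paper's resolution: complexify the parameter, so that $t\mapsto D^{g_t}_g$ becomes a holomorphic family on an open set $U'\subset\mC$ containing the real interval; after shrinking $U'$ one may assume the non-invertibility locus $T$ lies in the real interval, so that $U'\setminus T$ is \emph{connected}, $t\mapsto\al^{g_t}$ is holomorphic there, and the identity theorem propagates non-vanishing around the real bad points through the imaginary direction, giving density of $\{t\,|\,\al^{g_t}\neq 0\}$ in the real interval. Two caveats: first, this argument needs only holomorphy on $U'\setminus T$; the meromorphy of $\al^{g_t}$ at the points of $T$ that you invoke is not proved in this paper (the authors defer it to a companion paper) and is not needed. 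Second, your alternative fix (b) does not work as stated: even after perturbing $g_1$ into $\cRinv(M)$, there is no justification that a ``further generic choice'' keeps the whole segment (or any path to $g_0$) inside $\cRinv(M)$; openness and density of $\cRinv(M)$ give no control over its connected components, and the possibility that every path from $g_1$ to $g_0$ meets metrics with harmonic spinors is exactly the obstruction that the complexification is designed to circumvent. With fix (a) carried out properly, your proof coincides with the paper's.
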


\subsection{Continuity of the mass endomorphism}

The goal of this subsection is to prove that the mass endomorphism
depends continuously on $g$ in the $C^1$-topology.

\begin{proposition}
  Equip $\cR_{U,\gfl}^{\rm inv}(M)$ with the $C^1$-norm.  Then the map
  \begin{equation*}
    \cR_{U,\gfl}^{\rm inv} (M)\ni g
    \mapsto 
    \al^g \in \End(\Si_pM)
  \end{equation*}
  is continuous.
\end{proposition}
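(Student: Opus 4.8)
The plan is to show that $\alpha^g$ depends continuously on $g$ by writing it as $v^g(p)$ and controlling the family $v^g$ through the Green's function. I would fix $g_0 \in \cR_{U,\gfl}^{\rm inv}(M)$ and a sequence $g_j \to g_0$ in $C^1$. Since invertibility of $D^{g_0}$ is an open condition (the spectrum moves continuously under $C^1$-perturbation, by the estimate \eqref{D^h_g_estimate}), we may assume $g_j \in \cR_{U,\gfl}^{\rm inv}(M)$ for all $j$ and that the inverses $(D^{g_j})^{-1}: L^2 \to H^1$ are uniformly bounded. Fix $\psi_0 \in \Sigma_p M$ (recall $\Sigma_p^{g_j} M = \Sigma_p^{g_0} M$ since all metrics agree with $\gfl$ on $U$). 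Write $G^{g_j}\psi_0 = -\eta \frac{x}{\omega_{n-1}r^n}\cdot\psi_0 + v^{g_j}(\psi_0)$ with $\eta$ a fixed cutoff supported in $U$, exactly as in \eqref{formel.green}; the singular part is independent of $j$. Then $v_j \definedas v^{g_j}(\psi_0)$ solves $D^{g_j} v_j = \grad\eta \cdot \frac{x}{\omega_{n-1}r^n}\cdot\psi_0 =: F$, where $F$ is a fixed smooth compactly supported spinor field independent of $j$ (here again I use that $g_j = \gfl$ on $\supp(\grad\eta) \subset U$, so $F$ and the Clifford multiplication do not depend on $j$).

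Next I would transport everything to the fixed bundle $\Sigma^{g_0}M$ via the Bourguignon--Gauduchon maps $\beta^{g_j}_{g_0}$, setting $w_j \definedas \beta^{g_j}_{g_0} v_j$. By \eqref{D^h_g_estimate}, $D^{g_0} w_j = \beta^{g_j}_{g_0}(D^{g_j}_{g_0} v_j) = \beta^{g_j}_{g_0} F - \beta^{g_j}_{g_0}(A^{g_j}_{g_0}(\nabla^{g_j} v_j) + B^{g_j}_{g_0}(v_j))$, and since $g_j = g_0$ on $\supp F$ we have $\beta^{g_j}_{g_0} F = F$. Using the uniform bound on $(D^{g_0})^{-1}$ together with the bounds $|A^{g_j}_{g_0}|, |B^{g_j}_{g_0}| \leq C\epsilon_j$ with $\epsilon_j \to 0$, an absorption argument of exactly the kind used in Step~2 of the proof of Theorem~\ref{mass_to_infty} shows that $\|w_j\|_{H_1^p}$ is bounded uniformly in $j$, for $p$ large enough that $H_1^p \hookrightarrow C^0$; hence $\|v_j\|_{C^0}$ is bounded as well. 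Having a uniform bound, elliptic regularity (Lemma~2.2 of \cite{ammann.dahl.humbert:09}, or interior Schauder estimates, applied to $D^{g_j} v_j = F$ with uniformly $C^{0,\alpha}$-bounded coefficients) gives uniform $C^{1,\alpha}_{\rm loc}$-bounds for $v_j$ near $p$.

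Now I would argue by the subsequence principle: any subsequence of $(w_j)$ has a further subsequence converging in $C^1_{\rm loc}$ near $p$ (and weakly in $H_1^p$) to some $w_\infty$. Passing to the limit in $D^{g_0} w_j = F + o(1)$ in $L^p$ gives $D^{g_0} w_\infty = F$; moreover $w_\infty$ has the correct decay and the singular part of $G^{g_0}\psi_0 - (-\eta\frac{x}{\omega_{n-1}r^n}\cdot\psi_0)$ is $v^{g_0}(\psi_0)$, which is the \emph{unique} solution in the relevant space because $D^{g_0}$ is invertible. Hence $w_\infty = v^{g_0}(\psi_0)$, independently of the subsequence, so the full sequence $w_j \to v^{g_0}(\psi_0)$ in $C^1$ near $p$; evaluating at $p$ and noting $\beta^{g_j}_{g_0}$ is the identity on $\Sigma_p M$ (the metrics agree near $p$) yields $\alpha^{g_j}(\psi_0) = v_j(p) \to v^{g_0}(p)\psi_0 = \alpha^{g_0}(\psi_0)$. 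Since this holds for every $\psi_0$ in the finite-dimensional space $\Sigma_p M$, we get $\alpha^{g_j} \to \alpha^{g_0}$ in $\End(\Sigma_p M)$, proving continuity. The main obstacle is the uniform bound on $v_j$: one must combine the uniform invertibility of the Dirac operator with the $C^1$-closeness of the metrics (via the Bourguignon--Gauduchon comparison \eqref{D^h_g_estimate}) in an absorption argument, being careful that the singular model term and the source $F$ are genuinely $j$-independent because the metrics coincide on $U$.
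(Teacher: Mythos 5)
Your argument is correct, but it follows a genuinely different route from the paper. The paper fixes the Bourguignon--Gauduchon transported operator $D^{g_k}_g$, sets $P_k=D^{g_k}_g-D^g$ and $Q_k=(\beta^g_{g_k})^{-1}G^{g_k}\beta^g_{g_k}-G^g$, and exploits the exact resolvent-type identity $Q_k=-(D^{g_k}_g)^{-1}P_kG^g$: since the metrics agree on $U$, $P_kG^g\psi=P_k(1-\eta)G^g\psi$ involves only a smooth spinor, \eqref{D^h_g_estimate} gives $\|P_kG^g\psi\|_{C^0}\to0$, and Kato's theorem \cite[Thm.\ IV-1.16]{kato:66} gives convergence of the inverses as operators $C^0\to C^1$, so $Q_k\psi\to0$ in $C^1$ and one evaluates at $p$. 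You instead fix $\psi_0$, observe (as in \eqref{formel.green}) that $v_j=v^{g_j}(\psi_0)$ solves $D^{g_j}v_j=F$ with a $j$-independent smooth source $F$, transport to $\Si^{g_0}M$, obtain a uniform $H_1^p$ bound by the same absorption scheme as in Step~2 of the proof of Theorem~\ref{mass_to_infty} (here no projection is needed since $D^{g_0}$ is invertible, so $\|w\|_{H_1^p}\le C\|D^{g_0}w\|_{L^p}$ holds outright), and conclude by compactness plus uniqueness of the $H_1^p$-solution of $D^{g_0}w=F$ and the subsequence principle; finite-dimensionality of $\Si_pM$ then upgrades pointwise convergence at $p$ to convergence in $\End(\Si_pM)$. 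Both proofs rest on the same two structural facts -- the singular part of the Green's function is metric-independent because all metrics equal $\gfl$ on $U$, and \eqref{D^h_g_estimate} controls the difference of Dirac operators under $C^1$-perturbation -- but your soft compactness argument avoids Kato entirely, at the price of being less quantitative: the paper's operator identity is exactly what is recycled in the next subsection (with $Q_t=-(D^{g_t}_g)^{-1}P_tG^g$) to get real-analyticity and then holomorphy of $t\mapsto\al^{g_t}$, which a compactness argument would not deliver. Two small points to tidy up: the line ``$D^{g_0}w_j=\beta^{g_j}_{g_0}(D^{g_j}_{g_0}v_j)$'' mixes up the roles of the two bundles -- the clean statement is $D^{g_j}_{g_0}w_j=\beta^{g_j}_{g_0}D^{g_j}v_j=F$ (using that $\beta^{g_j}_{g_0}=\Id$ on $\supp F\subset U$), after which \eqref{D^h_g_estimate} expresses $D^{g_j}_{g_0}w_j-D^{g_0}w_j$ through $\nabla^{g_0}w_j$ and $w_j$, not through $\nabla^{g_j}v_j$; and the preliminary claim of uniformly bounded $(D^{g_j})^{-1}:L^2\to H^1$ is not actually needed, since your absorption step only uses the fixed bound for $(D^{g_0})^{-1}$ on $L^p$.
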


It follows that $\cR_{p,U,\gfl}^{\neq 0} (M)$ is open in
$\cR_{U,\gfl}^{\rm inv}(M)$ and thus in $\cR_{U,\gfl}(M)$.

\begin{proof}
  Let $(g_k)_{k\in\mN}$ be a family of metrics in $\cR_{U,\gfl}^{\rm inv}(M)$ 
  such that $g_k \to g$ in the $C^1$-topology. 
  For each $k$ the operator
  \begin{equation*}
    D^{g_k}_g 
    = 
    (\beta^g_{g_k})^{-1} D^{g_k} \beta^g_{g_k}
  \end{equation*}
  is invertible. We define
  \begin{equation*}
    P_k:=D^{g_k}_g - D^{g} .
  \end{equation*}
  Further, let $G^{g_k}$ and $G^g$ be the Green's functions of
  $D^{g_k}$ and $D^g$. We define
  \begin{equation*}
    Q_k := (\beta^g_{g_k})^{-1} G^{g_k} \beta^g_{g_k} - G^{g} .
  \end{equation*}
  Let $\psi\in\Si_pM$. 
  Using the equation (\ref{formel.green}) for $G^{g_k}$ and for $G^g$ 
  and using the fact that $g_k|_{U}=g|_U=\gfl$ we find that 
  \begin{equation*}
    Q_k\psi=(\be^g_{g_k})^{-1}v^{g_k}\be^g_{g_k}\psi-v^g\psi .
  \end{equation*}
  Therefore $Q_k\psi$ has a smooth continuation to all of $M$. 
  The equation $D^{g_k} G^{g_k} = D^gG^g = \delta_p \Id_{\Sigma_p M}$ then 
  tells us that
  \begin{equation*}
    Q_k = - (D^{g_k}_g)^{-1} P_k G^g.
  \end{equation*}
  $(G^g\psi)(x)$ becomes singular as $x\to p$. However we may take a smooth 
  function $\eta$ which is equal to $1$ near $p$ and has support in $U$ 
  and since $g_k|_{U}=g|_U=\gfl$ we obtain
  \begin{equation*}
    P_k(\eta G^g\psi)=D^g(\eta G^g\psi)-D^g(\eta G^g\psi)=0.
  \end{equation*}
  It follows that $P_kG^g\psi=P_k(1-\eta)G^g\psi$, where $(1-\eta)G^g\psi$ 
  is smooth on all of $M$.
  From (\ref{D^h_g_estimate}) it follows that the sequence 
  $(D^{g_k}_g)_{k\in\mN}$ converges to $D^g$ with respect to the norm of 
  bounded linear operators from $C^1(\Sigma^gM)$ to $C^0(\Sigma^gM)$.
  Therefore $\|P_kG^g\psi\|_{C^0}\to0$ as $k\to\infty$.
  Then it follows from \cite[Thm. IV-1.16]{kato:66} that 
  $((D^{g_k}_g)^{-1})_{k\in\mN}$ converges to $(D^{g})^{-1}$ 
  with respect to the norm of bounded linear operators from
  $C^0(\Sigma^gM)$ to $C^1(\Sigma^gM)$.
  Therefore $\|Q_k\psi\|_{C^1}\to0$ as $k\to\infty$.
  Evaluating $Q_k$ at $p$ yields $\al^{g_k}-\al^{g}$. 
  Thus the statement of the Proposition follows.
\end{proof}

\subsection{Real-analytic families of metrics}

Let $\varepsilon>0$. 
We say that a family $(g_t)_{t\in(-\varepsilon,\varepsilon)}$ 
of Riemannian metrics is real analytic if there exist 
sections $h_k$ of the bundle of symmetric bilinear forms 
on $M$, $k\in\mN$, such that for all 
$t\in(-\varepsilon,\varepsilon)$ and for all $r\in\mN$ we have 
$\|g_t-\sum_{k=0}^Nt^kh_k\|_{C^r}\to0$ as $N\to\infty$. 
Let $r$, $s\in\mN$.
A family $(P_t)_{t\in(-\varepsilon,\varepsilon)}$ of bounded 
linear operators $C^r(\Si^g M)\to C^s(\Si^g M)$ is called 
real analytic if there exist bounded linear operators 
$D_k$: $C^r(\Si^g M)\to C^s(\Si^g M)$, $k\in\mN$, such that 
for all $t$ we have 
$\|P_t-\sum_{k=0}^Nt^kD_k\|\to0$ as $N\to\infty$, where 
$\|.\|$ denotes the norm of bounded linear operators 
$C^r(\Si^g M)\to C^s(\Si^g M)$.

\begin{lemma}
  Let $M$ be closed and let $(g_t)_{t\in(-\varepsilon,\varepsilon)}$ 
  be a real analytic family of Riemannian metrics in 
  $\cR_{U,\gfl}^{\rm inv}(M)$. Then there exists 
  $\delta\in(0,\varepsilon]$ such that
  $(D^{g_t}_g)_{t\in(-\delta,\delta)}$ is a
  real analytic family of bounded linear 
  operators $C^1(\Si^gM)\to C^0(\Si^gM)$.
\end{lemma}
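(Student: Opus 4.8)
The plan is to reduce the lemma to a composition argument for real-analytic families of sections, starting from the explicit formula \eref{D^h_g_estimate} for $D^h_g$. By \eref{D^h_g} and \eref{D^h_g_estimate} we may write, for any Riemannian metric $h$ on $M$,
\begin{equation*}
  D^{h}_g\varphi = D^g\varphi + A^{h}_g(\nabla^g\varphi) + B^{h}_g(\varphi),
\end{equation*}
where $D^g\colon C^1(\Si^gM)\to C^0(\Si^gM)$ is fixed and $A^h_g$, $B^h_g$ are pointwise bundle maps given by universal expressions in $b^g_h=(a^g_h)^{-1/2}$, its inverse $(b^g_h)^{-1}$, the covariant derivative $\nabla^g b^g_h$, and the difference tensor $\nabla^h-\nabla^g$. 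In a local chart $a^g_h$ is the matrix product $h^{-1}g$, the Christoffel symbols of $\nabla^h$ are polynomial in $h^{-1}$ and the first derivatives of $h$, and the map $A\mapsto A^{-1/2}$ is real analytic on the open set of positive definite symmetric endomorphisms (e.g. by holomorphic functional calculus, or by the analytic implicit function theorem applied to $B\mapsto B^2$). Hence $A^h_g$ and $B^h_g$ are real-analytic functions of the $1$-jet of $h$, on the open set of $1$-jets whose metric part is positive definite. Since \eref{D^h_g} is written with a local frame, one should either check that $A^h_g$ and $B^h_g$ are globally well-defined bundle maps on $M$ (they are, being the coefficients of the globally defined operator $D^h_g-D^g$), or work with a finite atlas and a subordinate partition of unity; either way this is a harmless technicality.

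Next I would introduce the parameter. By hypothesis $(g_t)$ is real analytic in every $C^r$-norm, and since $\nabla^g\colon C^{r+1}\to C^r$ is a fixed bounded operator, the family of $1$-jets $t\mapsto(g_t,\nabla^g g_t)$ is again real analytic in every $C^r$-norm. The key step is a composition principle: if $\Phi$ is a real-analytic map and $(s_t)_{t\in(-\varepsilon,\varepsilon)}$ is a family of sections, real analytic in every $C^r$-norm, whose values lie in the domain of $\Phi$, then there is $\delta\in(0,\varepsilon]$ so that $(\Phi\circ s_t)_{t\in(-\delta,\delta)}$ is real analytic in every $C^r$-norm. I expect this to be the main obstacle. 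Its proof consists of expanding $\Phi$ in its Taylor series about the value $s_0$, substituting the power series of $s_t-s_0$, and rearranging into a power series in $t$; the rearrangement is legitimate because $C^r$ is a Banach algebra, so that the multilinear terms in the Taylor expansion of $\Phi$ restrict to bounded multilinear maps on $C^r$-sections, and the necessity of keeping $|t|$ below the relevant radius of convergence is exactly what forces the passage from $\varepsilon$ to some possibly smaller $\delta$. Applying this with $\Phi$ the universal expressions for $A^h_g$, $B^h_g$ and $s_t=(g_t,\nabla^g g_t)$, we obtain that $(A^{g_t}_g)$ and $(B^{g_t}_g)$ are real-analytic families of $C^0$ bundle maps on some interval $(-\delta,\delta)$.

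Finally I would pass from coefficients to operators. Multiplication by a $C^0$ bundle map is a bounded operator $C^0\to C^0$ whose norm is controlled by the $C^0$-norm of the bundle map, so the families $t\mapsto M_{A^{g_t}_g}$ and $t\mapsto M_{B^{g_t}_g}$ are real analytic in operator norm; composing on the right with the fixed bounded operator $\nabla^g\colon C^1\to C^0$, respectively with the bounded inclusion $C^1\hookrightarrow C^0$, and using that right composition with a fixed bounded operator preserves real analyticity of an operator family, we get that $\varphi\mapsto A^{g_t}_g(\nabla^g\varphi)$ and $\varphi\mapsto B^{g_t}_g(\varphi)$ are real-analytic families of operators $C^1(\Si^gM)\to C^0(\Si^gM)$. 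Adding the constant family $D^g$ and using that a finite sum of real-analytic operator families is real analytic, we conclude that $(D^{g_t}_g)_{t\in(-\delta,\delta)}$ is a real-analytic family of bounded operators $C^1(\Si^gM)\to C^0(\Si^gM)$, which is the assertion of the lemma.
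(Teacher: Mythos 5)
Your argument is correct and is essentially the paper's own proof in different packaging: both reduce the lemma to the real-analytic dependence on $t$ of the fiberwise data $a^g_{g_t}$, $b^g_{g_t}=(a^g_{g_t})^{-1/2}$ and $\nabla^{g_t}$ (analyticity of inversion and of the square root, with compactness of the closed manifold $M$ giving a uniform radius of convergence and hence the shrinkage from $\varepsilon$ to $\delta$), and then read off analyticity of $D^{g_t}_g$ from the Bourguignon--Gauduchon formula \eref{D^h_g}, respectively its rewriting \eref{D^h_g_estimate}. Your ``composition principle'' is just an abstract formulation of the paper's explicit local-coordinate power-series substitution, so no essential difference remains.
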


\begin{proof}
  Let $\|g_t-\sum_{k=0}^Nt^kh_k\|_{C^r}\to0$ as $N\to\infty$ for 
  all $r\in\mN$ and for all $t\in(-\varepsilon,\varepsilon)$.
  As in section \ref{identification_section} we define 
  endomorphisms $a^g_{g_t}$ and $a^g_{h_k}$, $k\in\mN$, of $TM$ 
  such that for all $X$, $Y$ in $TM$ we have
  \begin{equation*}
    g(a^g_{g_t}X,Y)=g_t(X,Y), \quad
    g(a^g_{h_k}X,Y)=h_k(X,Y).
  \end{equation*}
  Note that $a^g_{h_k}$ also exists if $h_k$ is not positive definite. 
  Let $|.|$ be the norm on $\Si^g M$ induced by the inner product 
  and let $\{e_i\}_{i=1}^n$ be a local $g$-orthonormal frame. 
  Since $(g_t)_{t\in(-\varepsilon,\varepsilon)}$ is real analytic 
  it follows that
  \begin{eqnarray*}
    &&\sup_{X\in TM,\,|X|=1} | a^g_{g_t}X - \sum_{k=0}^N t^k a^g_{h_k} X |\\
    &=&\sup_{X\in TM,\,|X|=1}
    | \sum_{i=1}^n g(a^g_{g_t}X,e_i) e_i 
    - \sum_{i=1}^n \sum_{k=0}^N t^k g(a^g_{h_k}X,e_i) e_i |\to0,
    \quad N\to\infty
  \end{eqnarray*}
  for all $t\in(-\varepsilon,\varepsilon)$. 
  In local coordinates one finds that for each $x\in M$ 
  there exists $\delta(x)\in(0,\varepsilon]$, 
  such that for all $X\in T_{x}M$ with $|X|=1$ the vector 
  $(a^g_{g_t})^{1/2}X$ is given by a power series which converges for 
  all $t\in(-\delta(x),\delta(x))$.
  Since $M$ is compact there exists $\delta\in(0,\varepsilon]$ such that 
  the convergence holds for all $X\in TM$, $|X|=1$ 
  and all $t\in(-\delta,\delta)$.
  Then after possibly decreasing $\delta$ a little further also 
  $b^g_{g_t}X$ for $t\in(-\delta,\delta)$ 
  is given by a power series which converges 
  uniformly in $X\in TM$, $|X|=1$. Furthermore for any vector fields 
  $X$, $Y$ the vector field $\nabla^{g_t}_{X}Y$ is also given 
  by a convergent power series as can be seen in local coordinates. 
  The assertion now follows from the 
  formula (\ref{D^h_g}) for $D^{g_t}_g$.
\end{proof}

\begin{proposition}
  If $(g_t)_{t\in (-\varepsilon,\varepsilon)}$ is a real-analytic
  family of metrics in $\cR_{U,\gfl}^{\rm inv}(M)$, then $\al^{g_t}$
  is also real-analytic.
\end{proposition}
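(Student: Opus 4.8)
The plan is to run the argument from the proof of the continuity Proposition, but bookkeeping real-analyticity instead of mere continuity, the two new ingredients being the preceding Lemma (which makes $(D^{g_t}_g)$ a real-analytic family of bounded operators $C^1(\Si^g M)\to C^0(\Si^g M)$) and the fact that inversion of a real-analytic operator family is again real-analytic. Since $\al^{g_t}$ takes values in the finite-dimensional space $\End(\Si_p M)$, real-analyticity is a local property in $t$, so it suffices to prove the claim near each $t_0\in(-\varepsilon,\varepsilon)$; re-expanding $(g_t)$ in powers of $(t-t_0)$ and invoking the Lemma, we may assume $t_0=0$ and that $(D^{g_t}_g)$ is real-analytic on some $(-\delta,\delta)$.

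First I would show that $((D^{g_t}_g)^{-1})_t$ is a real-analytic family of bounded operators $C^0(\Si^g M)\to C^1(\Si^g M)$. Write $T_t\definedas D^{g_t}_g=T_0+\sum_{k\ge1}t^kD_k$ with norm convergence and $T_0=D^g$ invertible. Then $T_t=T_0(\Id+S_t)$ with $S_t\definedas T_0^{-1}(T_t-T_0)\colon C^1(\Si^g M)\to C^1(\Si^g M)$ a real-analytic family vanishing at $t=0$. For $|t|$ small we have $\|S_t\|<1$, so $(\Id+S_t)^{-1}=\sum_{j\ge0}(-S_t)^j$ converges in operator norm, and as a substitution of a power series with vanishing constant term into the geometric series it is again real-analytic (this is the elementary analytic perturbation statement for bounded operators, see \cite{kato:66}). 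Hence $T_t^{-1}=(\Id+S_t)^{-1}T_0^{-1}$ is a real-analytic family $C^0(\Si^g M)\to C^1(\Si^g M)$, after possibly shrinking $\delta$.

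Next, fix $\psi\in\Si_p M$ and a cutoff $\eta$ equal to $1$ near $p$ with support in $U$. Exactly as in the continuity proof, $g_t|_U=\gfl$ for all $t$ forces $P_t(\eta G^g\psi)=0$ with $P_t\definedas T_t-T_0$, so with $Q_t\definedas(\be^g_{g_t})^{-1}G^{g_t}\be^g_{g_t}-G^g$ one gets
\begin{equation*}
  Q_t\psi=-T_t^{-1}P_t\bigl((1-\eta)G^g\psi\bigr),
\end{equation*}
where $(1-\eta)G^g\psi$ is a fixed (indeed smooth) element of $C^1(\Si^g M)$. Now $P_t=T_t-T_0$ is real-analytic $C^1(\Si^g M)\to C^0(\Si^g M)$, so $P_t\bigl((1-\eta)G^g\psi\bigr)$ is a convergent power series in $t$ with coefficients in $C^0(\Si^g M)$; composing with the real-analytic family $T_t^{-1}\colon C^0(\Si^g M)\to C^1(\Si^g M)$ and using the Cauchy product of power series, $Q_t\psi$ is a real-analytic family in $C^1(\Si^g M)$, in particular continuous up to $p$. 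Since $g_t|_U=\gfl$ we have $\be^g_{g_t}=\Id$ on $U$, so evaluating at $p$ gives $(Q_t\psi)(p)=\al^{g_t}(\psi)-\al^g(\psi)$; as evaluation at $p$ is a bounded linear map $C^1(\Si^g M)\to\Si_p M$, the family $t\mapsto\al^{g_t}(\psi)$ is real-analytic for each $\psi$, and running through a basis of the finite-dimensional space $\Si_p M$ shows $t\mapsto\al^{g_t}\in\End(\Si_p M)$ is real-analytic.

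The main obstacle is the first step: making precise that operator inversion preserves real-analyticity in the relevant \emph{pair} of spaces $C^0,C^1$. One must be careful to run the Neumann series in $\End(C^1(\Si^g M))$, where $S_t$ genuinely lives, before composing with $T_0^{-1}\colon C^0(\Si^g M)\to C^1(\Si^g M)$, and to check that substituting the zero-constant-term series $S_t$ into $\sum(-\cdot)^j$ really produces a norm-convergent power series on a (possibly smaller) interval. Once this is in place, everything else is a routine translation of the continuity proof, using only that sums, products, and compositions of real-analytic families are real-analytic.
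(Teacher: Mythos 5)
Your argument is correct and follows essentially the same route as the paper: real-analyticity of $(D^{g_t}_g)$ from the Lemma, real-analyticity of the inverse family, and the identity $Q_t=-(D^{g_t}_g)^{-1}P_tG^g$ (with the cutoff trick handling the singularity of $G^g\psi$), evaluated at $p$. The only difference is that you prove the inversion step by hand via a Neumann series, where the paper simply cites \cite[VII-\S 1.1]{kato:66}, and you make the re-centering at arbitrary $t_0$ explicit; both are fine.
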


\begin{proof} 
  It is sufficient to show that the family of operators
  $(\beta^g_{g_t})^{-1} G^{g_t} \beta^g_{g_t}$ is real analytic. 
  There exists $\delta\in(0,\varepsilon]$ such that 
  the family of operators 
  $(D^{g_t}_g)_{t\in(-\delta,\delta)}$ is real analytic. 
  It follows from \cite[VII-\S 1.1]{kato:66}
  that the family of operators 
  $((D^{g_t}_g)^{-1})_{t\in(-\delta,\delta)}$ 
  is also real analytic, possibly for some smaller $\delta$. 
  As above we define
  \begin{equation*}
    P_t := D^{g_t}_g - D^g , 
    \quad 
    Q_t := (\beta^g_{g_t})^{-1} G^{g_t} \beta^g_{g_t} - G^g 
  \end{equation*}
  and we obtain
  \begin{equation*}
    Q_t = -(D^{g_t}_g)^{-1} P_t G^{g}.
  \end{equation*}
  This completes the proof since the right hand side
  is real analytic.
\end{proof}

Consider a real analytic family $(g_t)_{t\in(a,b)}$ of Riemannian 
metrics on $M$. 
By unique continuation we immediately see: If there is a $t_0 \in
(a,b)$ with $\al^{g_{t_0}}\neq 0$, then the set
\begin{equation*}
  S \definedas
  \{t\in (a,b)\,|\,\al^{g_t}= 0\}
\end{equation*}
is a discrete subset of $(a,b)$.

Two metrics in the same connected component of $\cR_{U,\gfl}^{\rm
  inv}(M)$ can be joined by a piecewise real-analytic path of metrics.
It follows that if a connected component of $\cR_{U,\gfl}^{\rm inv}$
contains at least one metric with non-zero mass endomorphism, then the
metrics with non-zero mass endomorphism are dense in this component.
In order to obtain Theorem \ref{exis_to_gen}, we still have to discuss
families $(g_t)_{t \in (a,b)}$ where $D^{g_t}$ is not invertible for
some $t$. As the mass endomorphism is not defined for these $t$, we
complexify the parameter $t$ and pass around the metric with non
invertible $D^{g_t}$ in the imaginary direction. This is discussed in
the following subsection.

\subsection{Analytic continuation in the imaginary direction}

Again let $(g_t)_{t\in (a,b)}$ be a real-analytic family of metrics.
We assume $g_t\in\cR_{U,\gfl}(M)$ for any $t\in(a,b)$, but we do not
assume that all $D^{g_t}$ are invertible. Because of the
real-analyticity of $D^{g_t}_g$, the family can be extended to a
complex-analytic family of operators defined for $t$ in an open subset
$U\supset(a,b)$ of $\mC$.  In this complexification the operators
$D^{g_t}_g$ will no longer be self-adjoint, instead we have
$(D^{g_t}_g)^*=D^{g_{\overline t}}_g$.

As the set of invertible operators is open, we can assume without loss
of generality that $D^{g_t}_g$ is invertible on $U\setminus (a,b)$. In
other words we assume that
\begin{equation*}
  T \definedas
  \{t\in U\,|\, D^{g_t}_g\mbox{ is not invertible}\}
\end{equation*}
is contained in $(a,b)$.

The arguments from above also yield that $t\mapsto \al^{g_t}$ is a
holomorphic function on $U\setminus T$. As $U\setminus T$ is
connected, unique continuation implies the following Proposition.

\begin{proposition}
  If the mass endomorphism $\al^{g_{t_0}}$ is non-zero for any $t_0\in
  (a,b)\setminus T$, then
  \begin{equation*}
    \{t\in (a,b)\setminus T\,|\,\al^{g_t}\neq 0\}
  \end{equation*}
  is dense in $(a,b)$.
\end{proposition}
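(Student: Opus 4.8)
The plan is to regard $t\mapsto\al^{g_t}$ as an $\End(\Si_pM)$-valued holomorphic function on the connected open set $U\setminus T$ and to invoke the identity theorem, after noting that $T$ meets $(a,b)$ in a discrete set so that density in $U\setminus T$ passes down to density in $(a,b)$.

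First I would fix a reference metric $g\definedas g_{t_1}$ with $t_1\in(a,b)\setminus T$; such a $t_1$ exists because the hypothesis provides $t_0\notin T$ with $\al^{g_{t_0}}\neq0$. As explained above, the complex-analyticity of $t\mapsto(D^{g_t}_g)^{-1}$ on $U\setminus T$ (which uses invertibility but not self-adjointness, cf.\ \cite[VII-\S1.1]{kato:66}) together with the formulas $P_t\definedas D^{g_t}_g-D^g$, $Q_t\definedas-(D^{g_t}_g)^{-1}P_tG^g$ and $\al^{g_t}=\al^{g}+Q_t(p)$ shows that $t\mapsto\al^{g_t}$ is holomorphic on $U\setminus T$. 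Since $\End(\Si_pM)$ is finite-dimensional I would then pick a linear functional $\ell$ with $\ell(\al^{g_{t_0}})\neq0$, so that $f(t)\definedas\ell(\al^{g_t})$ is a scalar holomorphic function on the connected domain $U\setminus T$ with $f(t_0)\neq0$.

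Next I would observe that $T\cap(a,b)$ is discrete. Indeed, for real $t$ the operator $D^{g_t}$ is self-adjoint with discrete spectrum depending real-analytically on $t$, and since $D^{g_{t_0}}$ is invertible the usual analytic perturbation argument (as already used in the proof of Theorem~\ref{existence}) shows that $0\in\Spec D^{g_t}$ only for isolated values of $t$; as $T\subseteq(a,b)$ it follows that $T$ is discrete and hence nowhere dense in $(a,b)$, so that $(a,b)\setminus T$ is dense in $(a,b)$.

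Finally I would argue by contradiction. If $\{t\in(a,b)\setminus T\,|\,\al^{g_t}\neq0\}$ were not dense in $(a,b)$ there would be a non-empty open interval $I\subseteq(a,b)$ with $\al^{g_t}=0$ for all $t\in I\setminus T$; since $T$ is discrete, $I\setminus T$ contains a non-empty open interval $J\subseteq U\setminus T$ on which $f$ vanishes. As $J$ has accumulation points lying in the connected domain $U\setminus T$, the identity theorem forces $f\equiv0$ on $U\setminus T$, contradicting $f(t_0)\neq0$. Hence $\{t\in(a,b)\setminus T\,|\,\al^{g_t}\neq0\}$ is dense in $(a,b)$. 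The only point that needs any care is the holomorphic dependence of the ``Green's function'' $G^{g_t}$, and hence of $\al^{g_t}$, once $D^{g_t}_g$ loses self-adjointness under complexification; but this is already delivered by the identity $Q_t=-(D^{g_t}_g)^{-1}P_tG^g$ from the preceding subsection, so no new ingredient is required and the remaining steps are routine.
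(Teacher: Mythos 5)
Your proof is correct and takes essentially the same route as the paper: holomorphy of $t\mapsto\al^{g_t}$ on the connected domain $U\setminus T$ (via $Q_t=-(D^{g_t}_g)^{-1}P_tG^g$) combined with the identity theorem. You additionally make explicit two points the paper leaves implicit — the choice of a reference metric $g_{t_1}$ with invertible Dirac operator and the discreteness of $T\cap(a,b)$, the latter being genuinely needed to get density in all of $(a,b)$ rather than merely in $(a,b)\setminus T$ — which is a welcome clarification rather than a deviation.
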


We will show in \cite{ammann.dahl.hermann.humbert:p10b} that the mass
endomorphism is actually meromorphic on $U$. The order of the poles in
$T$ is essentially the highest vanishing order of the eigenvalues
passing zero. These considerations also yield an alternative proof of
Theorem~\ref{mass_to_infty}, and thus indirectly the other
statements of the article.

\providecommand{\bysame}{\leavevmode\hbox to3em{\hrulefill}\thinspace}
\providecommand{\MR}{\relax\ifhmode\unskip\space\fi MR }
\providecommand{\MRhref}[2]{%
  \href{http://www.ams.org/mathscinet-getitem?mr=#1}{#2}
}
\providecommand{\href}[2]{#2}


\end{document}